\def\ord{{\rm ord}}
\def\11{{\mathbb 1}}
\def\AA{{\mathbb A}}
\def\CC{{\mathbb C}}
\def\FF{{\mathbb F}}
\def\GG{{\mathbb G}}
\def\NN{{\mathbb N}}
\def\QQ{{\mathbb Q}}
\def\RR{{\mathbb R}}
\def\ZZ{{\mathbb Z}}
\def\cI{{\mathcal I}}
\def\cM{{\mathcal M}}
\def\cO{{\mathcal O}}
\mathchardef\alphag="7C0B \mathchardef\betag="7C0C
\mathchardef\gammag="7C0D \mathchardef\deltag="7C0E
\mathchardef\varepsilong="7C22 \mathchardef\varphig="7C27
\mathchardef\psig="7C20 \mathchardef\zetag="7C10
\mathchardef\epsilong="7C0F \mathchardef\rhog="7C1A
\mathchardef\taug="7C1C \mathchardef\upsilong="7C1D
\mathchardef\iotag="7C13 \mathchardef\thetag="7C12
\mathchardef\pig="7C19 \mathchardef\sigmag="7C1B
\mathchardef\etag="7C11 \mathchardef\omegag="7C21
\mathchardef\kappag="7C14 \mathchardef\lambdag="7C15
\mathchardef\mug="7C16 \mathchardef\xig="7C18
\mathchardef\chig="7C1F \mathchardef\nug="7C17
\mathchardef\varthetag="7C23 \mathchardef\varpig="7C24
\mathchardef\varrhog="7C25 \mathchardef\varsigmag="7C26
\mathchardef\Omegag="7C0A \mathchardef\Thetag="7C02
\mathchardef\Sigmag="7C06 \mathchardef\Deltag="7C01
\mathchardef\Phig="7C08 \mathchardef\Gammag="7C00
\mathchardef\Psig="7C09 \mathchardef\Lambdag="7C03
\mathchardef\Xig="7C04 \mathchardef\Pig="7C05
\mathchardef\Upsilong="7C07
\newtheorem{theorem}[subsection]{Theorem}
\newtheorem{theorems}[subsubsection]{Theorem}
\newtheorem{lem}[subsection]{Lemma}
\newtheorem{cor}[subsection]{Corollary}
\newtheorem{prop}[subsection]{Proposition}
\theoremstyle{definition}
\newtheorem{def-prop}[subsection]{Proposition-Definition}
\newtheorem{def-theorem}[subsection]{Theorem-Definition}
\newtheorem{def-lem}[subsection]{Lemma-Definition}
\theoremstyle{remark}
\theoremstyle{plain}
\numberwithin{equation}{subsection}
\def\boxit#1#2{\setbox1=\hbox{\kern#1{#2}\kern#1}%
\dimen1=\ht1 \advance\dimen1 by #1 \dimen2=\dp1 \advance\dimen2 by
#1
\setbox1=\hbox{\vrule height\dimen1 depth\dimen2\box1\vrule}%
\setbox1=\vbox{\hrule\box1\hrule}%
\advance\dimen1 by .4pt \ht1=\dimen1 \advance\dimen2 by .4pt
\dp1=\dimen2 \box1\relax}
\newcommand{\sur}[2]{\genfrac{}{}{0pt}{}{#1}{#2}}
\DeclareMathOperator*{\grad}{grad}
\def\ord{{\rm ord}}
\def\llp{\mathopen{(\!(}}
\def\rrp{\mathopen{)\!)}}
\begin{document}

\author[Raf Cluckers]{Raf Cluckers}
\address{Katholieke Universiteit Leuven,
Departement wiskunde, Celestijnenlaan 200B, B-3001 Leu\-ven,
Bel\-gium. Current address: \'Ecole Normale Sup\'erieure,
D\'epartement de ma\-th\'e\-ma\-ti\-ques et applications, 45 rue
d'Ulm, 75230 Paris Cedex 05, France}
\email{raf.cluckers@wis.kuleuven.be}
\urladdr{www.dma.ens.fr/$\sim$cluckers/}

\thanks{During the realization of this project, the author was
a postdoctoral fellow of the Fund for Scientific Research - Flanders
(Belgium) (F.W.O.)}

\begin{abstract}
We prove the remaining part of the conjecture by  Denef and Sperber
[Denef, J. and Sperber, S., \textit{Exponential sums mod $p^n$ and
{N}ewton polyhedra}, Bull.~Belg.~Math.~Soc., {\bf{suppl.}} (2001)
55-63] on nondegenerate local exponential sums modulo $p^m$. We
generalize Igusa's conjecture of the introduction of [Igusa, J.,
\textit{Lectures on forms of higher degree}, Lect.~math.~phys.,
Springer-Verlag, {\bf{59}} (1978)] from the homogeneous to the
quasi-homogeneous case and prove the nondegenerate case as well as
the modulo $p$ case. We generalize some results by Katz of [Katz, N.
M., \textit{Estimates for "singular" exponential sums},  Internat.
Math. Res. Notices (1999)  no. 16, 875-899] on finite field
exponential sums to the quasi-homogeneous case.
\end{abstract}

\subjclass
{Primary 11L07, 
11S40; 
  Secondary 11L05. 
}

\keywords{Exponential sums, nondegenerate polynomials, Igusa's
conjecture on exponential sums, Igusa's local zeta functions,
motivic integration}

\title
{Exponential sums: questions by Denef, Sperber, and Igusa}

\maketitle

\section{Introduction}

\subsection{Global sums: from homogeneous to quasi-homogeneous}
Let $f$ be a polynomial over $\ZZ$ in $n$ variables. As in
\cite{CDenSper}, \cite{Cigumodp} we look at the ``global''
exponential sums
$$
S_f(\frac{1}{N}):= \frac{1}{N^n}\sum_{x\in
\{0,\ldots,N-1\}^n}\exp(2\pi i\frac{ f(x)}{N}),
$$
where $N$ varies over the positive integers. In order to bound
$|S_f(1/N)|$, it suffices to find bounds when $N=p^m$ for integers
$m>0$ and prime numbers $p$.

When $f$ is homogeneous and nondegenerate w.r.t. its Newton
polyhedron at zero, Igusa's conjecture for the toric resolution of
$f$ conjectures that there exists $c>0$ such that
$$
|S_f(\frac{1}{p^m})|<cp^{-\sigma m}m^{n-1},
$$
for all primes $p$ and integers $m>0$, where $\sigma$ is the biggest
rational number such that $(1/\sigma,\ldots,1/\sigma)$ lies on the
Newton polyhedron at zero $\Delta_0(f)$ of $f$. In a later paper,
Denef and Sperber conjectured for the same $f$ that the stronger
bound
$$
|S_f(\frac{1}{p^m})|<cp^{-\sigma m}m^{\kappa-1},
$$
holds for some $c$, uniformly in big enough primes $p$ and all
integers $m>0$, with $\kappa$ the codimension in $\RR^n$ of the
smallest face of $\Delta_0(f)$ which contains
$(1/\sigma,\ldots,1/\sigma)$.
 Both these conjectures are proved, by Denef and Sperber
\cite{DenSper} under the extra condition that no vertex of
$\Delta_0(f)$ belongs to $\{0,1\}^n$ and by the author
\cite{CDenSper} in general.

\par
In this paper, we show that both these conjectures also hold when
$f$ is quasi-homogeneous and nondegenerate w.r.t.~$\Delta_0(f)$.
Quasi-homogeneous means that there exist integers $a_i>0$ such that
$f(x_1^{a_1},\ldots,x_n^{a_n})$ is homogeneous. This gives evidence
to our conjecture of \cite{Cigumodp} that the analogon of Igusa's
conjecture for exponential sums of \cite{Igusa3} holds for all
quasi-homogeneous polynomials, and not only for homogeneous ones.

We give some more evidence for this conjecture by proving the
analogue of Igusa's conjecture (with the motivic oscillation index
instead of the above $\sigma$, as in \cite{Cigumodp}), for all
quasi-homogeneous polynomials (also degenerated ones) for $m=1$,
thereby generalizing \cite{Cigumodp} and some results by Katz of
\cite{Katz} to the quasi-homogeneous case.

This work generalizes most of the known evidence for Igusa's
conjecture to evidence for its generalization to quasi-homogeneous
polynomials. (To our knowledge, only the case of isolated
singularities of \cite{Igusa3} is only done for homogeneous
polynomials, and remains open for quasi-homogeneous ones.)

\subsection{Local sums}
Since for more general $f$ than quasihomogeneous $f$, Igusa's
conjecture is not even conjecturally understood, Denef and Sperber
designed a local variant of Igusa's conjecture which they conjecture
to hold for all polynomials, see  \cite{DenSper}. We treat only the
nondegenerate case. Let $g$ be a polynomial in $n$ variables which
is nondegenerate w.r.t.~its Newton polyhedron at zero $\Delta_0(g)$.
Denef and Sperber studied the ``local'' exponential sum
$$
T_g(\frac{1}{p^m}):=\frac{1}{p^{mn}}\sum_{x\in \{jp\mid
j=1,\ldots,p^{m-1}\}}\exp(2\pi i\frac{ g(x)}{p^m}),
$$
for $p$ a prime and $m>0$ an integer, and conjectured that there
exists $c$ such that
$$
|T_g( \frac{1}{p^m})|<cp^{-\sigma(g) m}m^{\kappa(g)-1}
$$
for big enough prime numbers $p$ and all integers $m>0$, with
$\sigma(g)$ and $\kappa(g)$ as $\sigma$ and $\kappa$ but for $g$
instead of for $f$. Denef and Sperber \cite{DenSper} proved this
under the condition that no vertex of $\Delta_0(g)$ belongs to
$\{0,1\}^n$. In this paper we prove this conjecture of
\cite{DenSper} for all polynomials $g$ which are nondegenerate
w.r.t.~$\Delta_0(g)$ (thereby removing the condition of
\cite{DenSper} that no vertex of $\Delta_0(g)$ belongs to
$\{0,1\}^n$).

\subsection{Global sums: Igusa's question mod $p$}
Theorem \ref{mt2} below is the modulo $p$ case for quasi-homogeneous
polynomials of Igusa's conjecture for exponential sums of
\cite{Igusa3}. In this theorem, the polynomial $h$ is not
necessarily nondegenerate. It generalizes the main result of
\cite{Cigumodp} from the homogeneous to the quasi-homogeneous case.
The modulo $p^2$ case is already proven in \cite{Cigumodp} for all
polynomials. In order to prove Theorem \ref{mt2} we generalize some
results by Katz \cite{Katz}, see Theorem \ref{Katzgen} below.

\section{A dictionary}

\subsection{From finite sums to $p$-adic integrals}

For $\QQ_p$ the field of $p$-adic numbers, $x=(x_1,\ldots,x_n)$
variables running over $\QQ_p^n$, let $|dx|$ be the unique
(real-valued) Haar measure on $\QQ_p^n$ so normalized that $\ZZ_p^n$
has measure one. For $a\in\QQ_p$, the complex number
$$
\exp(2\pi i a\bmod \ZZ_p):=\exp(2\pi i a')
$$
does not depend on the choice of representant $a'$ in $\QQ$ of
$a\bmod \ZZ_p$, and will be denoted by $\exp(2\pi i a)$.

Then, for $f$ a polynomial in $n$ variables over $\QQ$, one has the
equality
$$
S_f(\frac{1}{p^m})=\int_{\ZZ_p^n}\exp(2\pi i \frac{f(x)}{p^m})|dx|,
$$
and we will more generally consider for all $y\in \QQ_p$ the
integral
$$
S_{f,\QQ_p}(y):= \int_{\ZZ_p^n}\exp(2\pi i yf(x))|dx|.
$$

\subsection{From $\QQ_p$ to finite field extensions of $\QQ_p$ and to
$\FF_q\llp t\rrp $}\label{ST}

If $f$ is a polynomial over $\cO[1/N]$ for some ring of integers
$\cO$ and $N>0$ an integer, when $K$ is a nonarchimedean local field
which is an algebra over $\cO[1/N]$ (thus $p$-adic or of the form
$\FF_q\llp t\rrp $), and $\psi_K:K\to\CC^\times$ is a nontrivial
additive character which is $1$ on $\cO_K$ and different from $1$ on
some element of order $-1$, then we write for $y\in K$
$$
S_{f,K,\psi_K}(y):=\int_{\cO_K^n}\psi_K( y f(x))|dx|,
$$
with $\cO_K$ the valuation ring of $K$ and $|dx|$ the Haar measure,
normalized so that $\cO_K^n$ has measure one.

Similarly, if $g$ is a polynomial over $\cO[1/N]$ and with $K$ and
$\psi_K$ as above in this section, we write
$$
T_{g,K,\psi_K}(y):=\int_{\cM_K^n}\psi_K( y g(x))|dx|
$$
with $\cM_K$ the maximal ideal of $\cO_K$.

For $K=\QQ_p$ one has
$$
T_g(\frac{1}{p^m})=T_{g,\QQ_p,\exp(2\pi i\cdot)}(\frac{1}{p^m})
$$
and
$$
S_f(\frac{1}{p^m})=S_{f,\QQ_p,\exp(2\pi i\cdot)}(\frac{1}{p^m}).
$$

Write $|\cdot|_K$ for the standard norm on $K$. So, the norm of a
uniformizer of $\cO_K$ equals $\frac{1}{q_K}$ with $q_K$ the number
of elements in the residue field of $\cO_K$. Let $|\cdot|$ denote
the norm on $\CC$.

\subsection{Nondegenerate polynomials}\label{s:nondeg}

Let $f$ be a nonconstant polynomial over $\CC$ in $n$ variables with
$f(0)=0$.\footnote{When $f(0)\not=0$, then there is no harm in
replacing $f$ by $f-f(0)$: all corresponding changes in the paper
are easily made.} Write $f(x)=\sum_{i\in \NN^n} a_i x^i$ with
$a_i\in \CC$.
 The \emph{global Newton polyhedron} $\Delta_{\rm global}(f)$ of $f$
is the convex hull of the support ${\rm Supp}(f)$ of $f$, with
$${\rm Supp}(f):=\{i\mid i\in\NN^n,\, a_i\not=0\}.$$
  The \emph{Newton polyhedron $\Delta_0(f)$
of $f$ at the origin} is
$$
\Delta_0(f):=\Delta_{\rm global}(f)+\RR_+^n
$$
with $\RR_+=\{x\in\RR\mid x\geq 0\}$ and $A+B=\{a+b\mid a\in A,\
b\in B\}$ for $A,B\subset\RR^n$.
 For a subset $I$ of $\RR^n$ define
 $$f_I(x):= \sum_{i\in I\cap
\NN^n} a_i x^i.$$
  By the \emph{faces} of $I$ we mean
$I$ itself and each nonempty convex set of the form
$$
\{x\in I\mid  L(x)=0\}
$$
where $L(x)=b_0+ \sum_{i=1}^n b_ix_i$ with $b_i\in \RR$ is such that
 $L(x)\geq 0$ for each $x\in I$.
 For $\cI$ a collection of subsets of $\RR^n$, call $f$
\emph{nondegenerate with respect to $\cI$} when $f_I$ has no
critical points on $(\CC^\times)^n$ for each $I$ in $\cI$, where
$\CC^\times=\CC\setminus\{0\}$. As is standard terminology, call $f$
nondegenerate w.r.t.~$\Delta_0(f)$ when $f$ is nondegenerate
w.r.t.~the compact faces of $\Delta_0(f)$.

 For $k\in\RR^n_+$ put
\begin{eqnarray*}
\nu(k) & = & k_1+k_2+\ldots+k_n,\\
N(f)(k) & = & \min_{i\in\Delta_0(f)} k\cdot i,\\
F(f)(k) &= & \{i\in \Delta_0(f)\mid k\cdot i = N(f)(k)\},
\end{eqnarray*}
where $k\cdot i$ is the standard inproduct on $\RR^n$. Denote by
$F_0(f)$ the smallest face of $\Delta_0(f)$ which has nonempty
intersection with the diagonal $\{(t,\ldots,t)\mid t\in\RR\}$ and
let $(1/\sigma(f),\ldots,1/\sigma(f))$ be the intersection point of
the diagonal with $F_0(f)$. Let $\kappa(f)$ be the codimension of
$F_0(f)$ in $\RR^n$. If there is no possible confusion, we write
$\sigma$ instead of $\sigma(f)$, $N(k)$ instead of $N(f)(k)$, $F(k)$
for $F(f)(k)$, and $\kappa$ for $\kappa(f)$.

\subsection{Notation}\label{fg} Often in this paper, $f$ is a quasi-homogeneous
polynomial and $g$ a polynomial, both over $\cO[1/N]$ and in $n$
variables, with $\cO$ a ring of integers and $N>0$ an integer, such
that $f$ and $g$ are nonzero, $f(0)=g(0)=0$ and such that $f$ is
nondegenerate w.r.t.~$\Delta_0(f)$ and $g$ is nondegenerate
w.r.t.~$\Delta_0(g)$. If $f$ and $g$ are such we will say that they
are as in \ref{fg}.

By $K$ is usually meant a nonarchimedean local field that is an
algebra over $\cO[1/N]$ and by $q_K$ the number of elements in the
residue field of $\cO_K$. If $f$ and $g$, $K$ and $q_K$ are such we
will say that they are as in \ref{fg}.

\section{The main results}

Let $f$, $g$, and $K$ be as in \ref{fg} and use the notation of
sections \ref{ST} and \ref{s:nondeg}.

\begin{theorem}\label{mt1}
There exists $c$, only depending on $\Delta_0(f)$, resp.~on
$\Delta_0(g)$, such that for all $K$ with big enough residual
characteristic, all $\psi_K$ as in section \ref{ST}, and all $y$ in
$K$ with $\ord_K(y)<0$,
\begin{equation}\label{c2}
|S_{f,K,\psi_K}(y)|< c|y|_K^{-\sigma(f)}|\ord_K(y)|^{\kappa(f)-1},
\end{equation}
resp.
\begin{equation}\label{c3}
|T_{g,K,\psi_K}(y)|<c|y|_K^{-\sigma(g)}|\ord_K(y)|^{\kappa(g)-1}.
\end{equation}
\end{theorem}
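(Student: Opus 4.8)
The plan is to prove first the local bound \eqref{c3} for $T_{g,K,\psi_K}$, and then to deduce the global bound \eqref{c2} for the quasi-homogeneous $f$ from it, extending the method of \cite{CDenSper} from the homogeneous to the weighted setting. For the deduction I would use the weighted blow-up of the origin attached to the weights of $f$: if $f(x_1^{a_1},\dots,x_n^{a_n})$ is homogeneous of degree $d$, split $\cO_K^n$ into the region where $\ord_K(x_i)\ge a_i$ for every $i$ and its complement. On the first region the substitution $x_i\mapsto\varpi^{a_i}x_i$ (with $\varpi$ a uniformizer) turns $S_{f,K,\psi_K}(y)$ into $q_K^{-\nu(\mathbf{a})}S_{f,K,\psi_K}(\varpi^{d}y)$, so that $|\ord_K(y)|$ decreases by $d$; on the complement at least one coordinate is ``small relative to its weight'', and after fixing the valuations of all coordinates and using quasi-homogeneity one is reduced to integrals of the form $T_{g',K,\psi_K}$ for face polynomials $g'=f_F$ in fewer variables, which are again nondegenerate. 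Since the weighted blow-up changes neither $\sigma$ nor $\kappa$, an induction on $|\ord_K(y)|$ then yields \eqref{c2} from \eqref{c3} (and its obvious variants in fewer variables).

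For \eqref{c3} itself, the approach is the combinatorial one underlying \cite{DenSper}: choose a finite fan of simplicial cones $\delta$ subdividing $\RR^n_+$ on which $k\mapsto N(g)(k)$ is linear and $k\mapsto F(g)(k)$ is constant, and write
\[
T_{g,K,\psi_K}(y)=\sum_{\delta}\ \sum_{k\in\delta\cap(\NN^n\setminus\{0\})} q_K^{-\nu(k)}\,E_{F(g)(k)}\!\big(y\varpi^{N(g)(k)}\big),
\]
where $E_F(z)$ is, up to lower order contributions, the normalised exponential sum attached to the face polynomial $g_F$ and the additive character $w\mapsto\psi_K(zw)$. When $\ord_K(y)+N(g)(k)\ge 0$ the inner term is just $q_K^{-\nu(k)}$; when $\ord_K(y)+N(g)(k)<0$ one needs cancellation in $E_F$. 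Here nondegeneracy is used: $g_F$ has no critical point on $(\CC^\times)^n$, hence the finite field exponential sums of $g_F$ over $(\FF_{q_K}^\times)^n$ enjoy square-root cancellation by the Weil--Deligne and Adolphson--Sperber estimates, and in the remaining cases — precisely those where a vertex of $\Delta_0(g)$ lies in $\{0,1\}^n$, which is what forced the restrictive hypothesis in \cite{DenSper} — by the sharper bounds of Theorem \ref{Katzgen}. Summing the resulting (essentially geometric) series cone by cone, the dominant contribution comes from the cones whose interior meets the ray through $F_0(g)$, i.e. those on which $N(g)(k)/\nu(k)$ attains its maximum $1/\sigma(g)$; in such a cone the number of lattice points $k$ with $\nu(k)\asymp|\ord_K(y)|$ grows like $|\ord_K(y)|^{\kappa(g)-1}$, because $F_0(g)$ has codimension $\kappa(g)$, and this produces exactly the asserted power of $|\ord_K(y)|$.

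The main obstacle is this last step, together with the analysis of the faces meeting $\{0,1\}^n$: one must check that neither the cones realising the extremal ratio $1/\sigma(g)$ nor the ``degenerate'' face polynomials attached to vertices in $\{0,1\}^n$ inflate the power of $|\ord_K(y)|$ beyond $\kappa(g)-1$. Controlling the degenerate terms is exactly what requires the full strength of Theorem \ref{Katzgen} (the quasi-homogeneous extension of Katz's estimates for ``singular'' exponential sums), and controlling the extremal cones requires a careful lattice-point count; this is the part that goes genuinely beyond \cite{DenSper}. Finally, all constants are built only from the combinatorics of $\Delta_0(f)$, resp.\ of $\Delta_0(g)$, and the uniformity over all $K$ of large residue characteristic follows because the fan, the face polynomials, and hence every exponential sum estimate invoked, are defined over $\cO[1/N']$ for a suitable integer $N'$.
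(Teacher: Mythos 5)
The general framework of your sketch — decompose over cones of the fan, use combinatorial lattice-point estimates for the ``volume'' contributions, and finite field exponential sum estimates for the ``oscillatory'' contributions at the critical order — is indeed the right one, and it is what the paper does via the Denef--Sperber formula (Proposition~\ref{r21}) combined with Proposition~\ref{AB}. But there are two places where your route diverges from the paper's and where, as written, the argument does not close.

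First, the paper does \emph{not} derive (\ref{c2}) from (\ref{c3}): both are proved in parallel and directly from Proposition~\ref{r21}, which is a single change-of-variables identity over valuation strata of $\cO_K^n$, resp.\ $\cM_K^n$. Your proposed weighted blow-up induction is plausible in spirit, but the claim that the complementary region (where some $\ord_K(x_i)<a_i$) ``reduces to integrals of the form $T_{g',K,\psi_K}$ for face polynomials in fewer variables'' is not substantiated; the complement of the polydisc $\prod_i\pi^{a_i}\cO_K$ is not a product of maximal ideals, and making the reduction precise essentially requires redoing the stratification that Proposition~\ref{r21} already encodes. You would also need to verify that the passage from $f$ to its faces preserves the exponents $\sigma$ and $\kappa$ in the form the induction needs, which is nontrivial and not argued.

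Second, and more seriously, your finite field input is not the estimate that actually makes the bookkeeping work. The $B$-type contribution in Proposition~\ref{AB} carries an \emph{extra} factor $q_K^{\sigma(f_\tau)}$ (a consequence of Proposition~\ref{len}, i.e.\ of subtracting $\sigma(h_\tau)$ rather than $\sigma(h)$ in the lower bound for $\nu(k)$), and the whole point is that the exponential sum $E(K,y,f_\tau,\psi_K)$ associated to the face $\tau$ must be bounded by $cq_K^{-\sigma(f_\tau)}$ so that these two factors cancel exactly. This is precisely Proposition~\ref{c:quasinondeg}, the one genuinely new finite-field input of the paper, obtained by reducing the quasi-homogeneous nondegenerate case to the homogeneous one (Lemma~\ref{sigfg} and Corollary~6.4 of~\cite{CDenSper}) via successive torus substitutions $x_1\mapsto x_1 y$. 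Generic square-root cancellation from Weil--Deligne or Adolphson--Sperber does not give $q_K^{-\sigma(f_\tau)}$ in general, and invoking Theorem~\ref{Katzgen} here is a wrong turn: the paper states explicitly that the results of Section~\ref{sec:Katz} ``will only serve to prove Theorem~\ref{mt2}, not to prove Theorem~\ref{mt1}.'' Theorem~\ref{Katzgen} bounds by $q^{(-n+d)/2}$ with $d$ the dimension of the critical locus, and there is no argument in your sketch showing $(n-d)/2\geq\sigma(f_\tau)$ for all (nondegenerate, quasi-homogeneous) face polynomials, which is what would be needed. In short: the proposal misses the central new ingredient, Proposition~\ref{c:quasinondeg}, and replaces it with estimates that are not shown to be strong enough to cancel the $q_K^{\sigma(f_\tau)}$ in the $B$-term.
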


In the case that $f$ is moreover homogeneous, a slightly weaker form
than (\ref{c2}) has been conjectured by Igusa \cite{Igusa3} and
(\ref{c2}), (\ref{c3}) have been conjectured by Denef and Sperber
\cite{DenSper}.
 The similar conjecture by Igusa mentioned in the beginning of the
introduction (namely for all primes $p$) then follows by standard
arguments, cf.~\cite{Cigumodp}. Denef and Sperber prove (\ref{c3})
when no vertex of $\Delta_0(g)$ belongs to $\{0,1\}^n$. As briefly
mentioned in the introduction, (\ref{c2}) for homogeneous $f$ is
proven in \cite{DenSper} under the condition that no vertex of
$\Delta_0(f)$ belongs to $\{0,1\}^n$ and in \cite{CDenSper} in full
generality. Recall that for (\ref{c2}), $f$ is allowed to be
quasi-homogeneous and $g$ is general.

\subsection{} Let $h$ be any quasi-homogeneous polynomial over
$\cO[1/N]$ (so, $h$ is not necessarily nondegenerate). Let
$\alpha_h$ be the motivic oscillation index of $h$ as defined in
\cite{Cigumodp} according to a suggestion by Jan Denef. As in \cite{Cigumodp}, we
conjecture that there
exists $c$ such that for all $K$ with big enough residual
characteristic (where $K$ runs over nonarchimedean local fields that are algebras over $\cO[1/N]$), all $\psi_K$ as
in section \ref{ST}, and all $y$ in $K$  with $\ord_K(y)<0$,
\begin{equation}\label{c4}
|S_{h,K,\psi_K}(y)|< c|y|_K^{\alpha_h}|\ord_K(y)|^{n-1}.
\end{equation}
Generalizing the main theorem of \cite{Cigumodp} to the
quasi-homogeneous case, we prove the following evidence for this
conjecture.
\begin{theorems}\label{mt2}
Statement (\ref{c4}) holds for all $y\in K$ of order $-1$ and $-2$.
That is, there exists $c$ such that for all $K$ of big enough
residual characteristic with $K$ an algebra over $\cO[1/N]$, all
$\psi_K$ as in section \ref{ST}, and all $y\in K$ of order $-1$ or
order $-2$, statement (\ref{c4}) holds.
\end{theorems}

Note that (\ref{c2}) of Theorem \ref{mt1} also constitutes evidence
for this conjecture.

\subsection{}
A third main result is Theorem \ref{Katzgen} on finite field
exponential sums and generalizes some results by Katz \cite{Katz} to
the quasi-homogeneous case (not necessarily nondegenerated).
Proposition \ref{c:quasinondeg} represents a new kind (a similar
bound in the homogeneous case was introduced in \cite{CDenSper}) of
very simple bounds for nondegenerate finite field exponential sums
for quasihomogeneous polynomials, based on the combinatorics of the
Newton Polyhedron.

\subsection{}
The underlying idea of the proofs of these questions, apart from the
ideas and results of \cite{DenSper}, \cite{Katz}, \cite{CDenSper},
and \cite{Cigumodp}, is that one can make a homogeneous polynomial
out of a quasi-homogeneous polynomial $f(x)$ in finitely many steps,
by replacing one of the variables $x_i$ by $yx_i$ with $y$ a new
variable, that is, by replacing $f(x)$ by
$$f_1(x,y):=f(x_1,x_2,\ldots, x_{i-1},x_iy,x_{i+1},\ldots,x_n)$$ and so
on, and then one compares the considered objects for $f$ and for
$f_1$.

\section{A Denef - Sperber Formula for $S_{f,K,\psi_K}$ and $T_{g,K,\psi_K}$}

The following proposition has essentially the same proof as
Proposition (2.1) of \cite{DenSper}, but is slightly more general.
We give the proof for the convenience of the reader.

\begin{prop}\label{r21} Let $f$, $g$, $K$, and $q_K$ be as in \ref{fg}. There
exists $M>0$ such that if the residual characteristic of $K$ is
bigger than $M$, then for all $y$ in $K$ with $\ord_K(y)<0$,
 \begin{equation}\label{DF}
S_{f,K,\psi_K}(y)=d_K\cdot\sum_{\small \mbox{$\tau$ face of
$\Delta_0(f)$}}\big( A_f(K,y,\tau)
 +
 E(K,y,f_\tau,\psi_K) B_f(K,y,\tau)
 \big)
\end{equation}
and
 \begin{equation}\label{DF2}
T_{g,K,\psi_K}(y)=d_K\cdot\sum_{\sur{\mbox{$\tau$ compact}} {\mbox{
face of $\Delta_0(g)$}}}\big( A_g(K,y,\tau)
 +
 E(K,y,g_\tau,\psi_K) B_g(K,y,\tau)
 \big),
\end{equation}
 with
$$
d_K:=(1-q_K^{-1})^n,
$$
$$
A_f(K,y,\tau):=\sum_{\small
\begin{array}{c}k\in \NN^n
\\ F(f)(k)=\tau\\ N(f)(k)\geq -\ord(y) \end{array}}
q_K^{-\nu(k)},
$$
$$
B_f(K,y,\tau):= \sum_{\small
\begin{array}{c}k\in \NN^n
\\ F(f)(k)=\tau\\ N(f)(k)= -\ord(y)-1 \end{array}}q_K^{-\nu(k)},
$$
and similarly for $A_g(K,y,\tau)$ and $A_g(K,y,\tau)$, and for
$h(x)$ either $f_\tau$ or $g_\tau$,
\begin{equation}\label{Ept.}
E(K,y,h,\psi_K)=\frac{1}{(q_K-1)^n} \sum_{\small \mbox{$x\in
(\GG_m(\FF_{q_K}))^n$}}\varphi_{y}(h(x)),
\end{equation}
with $\varphi_{y}:\FF_{q_K} \to\CC^\times$ a nontrivial additive
character on $\FF_{q_K}$ sending $b$ to $\psi_K(y \pi_K^{-\ord
y-1}b')$, with $b'$ a representant in $\cO_K$ of $b$ and $\pi_K$ a
uniformizer of $\cO_K$.

\end{prop}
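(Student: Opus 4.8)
The plan is to expand the $p$-adic integral defining $S_{f,K,\psi_K}(y)$ by cutting $\cO_K^n$ into pieces on which the valuations of the coordinates are prescribed, and on each piece change variables to isolate the leading quasi-homogeneous part of $f$. Concretely, for $k=(k_1,\dots,k_n)\in\NN^n$ with each $k_i\geq 0$, consider the set $A_k$ of $x\in(\cO_K\setminus\{0\})^n$ with $\ord_K(x_i)=k_i$, together with the exceptional locus where some $x_i=0$ (which has measure zero and can be ignored). Writing $x_i=\pi_K^{k_i}u_i$ with $u_i\in\cO_K^\times$, the Haar measure contributes a factor $q_K^{-\nu(k)}$, where $\nu(k)=k_1+\dots+k_n$, and the integral over $A_k$ becomes $q_K^{-\nu(k)}(1-q_K^{-1})^{-n}\cdot d_K\cdot\int_{(\cO_K^\times)^n}\psi_K\big(yf(\pi_K^{k}u)\big)|du|$ up to bookkeeping of the normalization; here $d_K=(1-q_K^{-1})^n$ is exactly the measure of $(\cO_K^\times)^n$.

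Next I would analyze $f(\pi_K^{k}u)$. Each monomial $a_ix^i$ of $f$ picks up a factor $\pi_K^{k\cdot i}$, so $\ord_K$ of that term is at least $N(f)(k)=\min_{i\in\Delta_0(f)}k\cdot i$, with equality precisely for the monomials whose exponent lies on the face $\tau=F(f)(k)$. Thus $f(\pi_K^{k}u)=\pi_K^{N(f)(k)}\big(f_\tau(u)+\pi_K(\text{higher order})\big)$. Now split according to $N(f)(k)$ versus $-\ord_K(y)$. When $N(f)(k)\geq -\ord_K(y)$, the argument $yf(\pi_K^{k}u)$ lies in $\cO_K$, hence $\psi_K$ of it is $1$, and the piece contributes simply $d_K\,q_K^{-\nu(k)}$; summing over all such $k$ with $F(f)(k)=\tau$ gives $d_K\,A_f(K,y,\tau)$. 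When $N(f)(k)\leq -\ord_K(y)-2$, a standard argument (using that $f_\tau$ has no critical points on $(\CC^\times)^n$, i.e.\ nondegeneracy, once the residue characteristic exceeds some $M$ so that reduction mod $\cM_K$ behaves well) shows the integral over $(\cO_K^\times)^n$ vanishes: one further subdivides the $u_i$ into residue-disc cosets, and on each coset the nonvanishing of some partial derivative of $f_\tau$ at the relevant residue point makes $\psi_K\circ(yf)$ a nontrivial character in that variable, killing the integral. The only surviving ``oscillatory'' contribution is the middle range $N(f)(k)=-\ord_K(y)-1$, where $yf(\pi_K^{k}u)=\pi_K^{-1}y\,\pi_K^{-\ord_K(y)}(f_\tau(u)+\cO_K)$; reducing mod $\cM_K$ turns the integral over $(\cO_K^\times)^n$ into the finite-field exponential sum $E(K,y,f_\tau,\psi_K)$ times the measure factor, and summing over $k$ with $F(f)(k)=\tau$ and $N(f)(k)=-\ord_K(y)-1$ produces $d_K\,E(K,y,f_\tau,\psi_K)\,B_f(K,y,\tau)$. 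Collecting the three ranges over all faces $\tau$ gives \eqref{DF}. The proof of \eqref{DF2} is identical except that the $x_i$ run over $\cM_K$, so $k_i\geq 1$, which forces the support to meet only the \emph{compact} faces of $\Delta_0(g)$ (an unbounded face would require some $k_i\to\infty$ but then $N(g)(k)$ stays bounded only if ... — in any case the combinatorics restricts to compact $\tau$), whence the restricted sum in \eqref{DF2}.

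The main obstacle is the vanishing claim in the range $N(f)(k)\leq -\ord_K(y)-2$: one must argue uniformly in $k$ and in $K$ that the higher-order terms $\pi_K(\text{stuff})$ do not interfere, and that nondegeneracy of $f_\tau$ over $\CC$ transfers to the residue field of $K$ for all large residual characteristic — this is where the bound $M$ on the residue characteristic enters, via a Nullstellensatz/resultant argument bounding the primes of bad reduction of the critical locus of each $f_\tau$. Since $\Delta_0(f)$ has only finitely many faces and each $f_\tau$ is nondegenerate, one such $M$ works for all $\tau$ simultaneously. The rest is careful bookkeeping of the three measure contributions, following Proposition (2.1) of \cite{DenSper} essentially verbatim.
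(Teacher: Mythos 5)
Your proposal follows the paper's own proof line by line: the valuation-stratification of the domain, the substitution $x_j=\pi_K^{k_j}u_j$ producing $|dx|=q_K^{-\nu(k)}|du|$ and $f(x)=\pi_K^{N(f)(k)}(f_\tau(u)+\pi_K(\cdots))$, the trichotomy on $\ord_K(y)+N(f)(k)$ with vanishing in the range $\le -2$ by nondegeneracy and character-sum cancellation, the trivial contribution giving $A_f$, the middle range giving $E\cdot B_f$, and the restriction to $k_i\ge 1$ for $T_g$ — all of this is exactly the paper's argument (which itself follows Denef--Sperber (2.1)).

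Two small points worth flagging. First, in the vanishing step you assert ``each $f_\tau$ is nondegenerate'' for all faces $\tau$, but nondegeneracy w.r.t.\ $\Delta_0(f)$ as defined only covers the \emph{compact} faces, and the formula \eqref{DF} sums over all faces, compact or not; the reason the argument still works is that $f$ is quasi-homogeneous, which forces $f_\tau$ for every face $\tau$ (including unbounded ones) to have no critical points on the torus once the compact ones don't — the paper makes this remark explicitly, and without it the step is incomplete for general (non-quasi-homogeneous) $f$. Second, for \eqref{DF2} you leave the compact-face restriction half-argued (``stays bounded only if \dots''); the clean justification is simply that for $k$ with all $k_i\ge 1$ the linear form $i\mapsto k\cdot i$ is proper on $\Delta_0(g)$, so its minimum is attained on a compact face, hence $F(g)(k)$ is compact. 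Neither point changes the structure of your proof, but both should be stated.
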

\begin{proof}
Rewrite the definition
$$
S_{f,K,\psi_K}(y)=\int_{\cO_K^n}\psi_K( y f(x))|dx|
$$
as
$$
S_{f,K,\psi_K}(y) = \sum_{\small \mbox{$\tau$ face of
$\Delta_0(f)$}}\sum_{\small
\begin{array}{c}k\in \NN^n
\\ F(f)(k)=\tau\end{array}}
 \int_{\ord_K\, x=k}\psi_K(y f(x))|dx|.
$$
Put $x_j=\pi_K^{k_j}u_j$ for $k\in\NN^n$, with $\pi_K$ a uniformizer
of $\cO_K$. Then $|dx|=q_K^{-\nu(k)}|du|$ and
$$
f(x)=\pi_K^{N(f)(k)}\big(f_{F(f)(k)}(u)+ \pi_K(...)\big),
$$
for $x$ with $\ord x=k$, where the dots take values in $\cO_K$.
Hence, $S_{f,K,\psi_K}(y)$ equals
\begin{equation}\label{ein}
\sum_{\small \mbox{$\tau$ face of $\Delta_0(f)$}}\sum_{\small
\begin{array}{c}k\in \NN^n
\\ F(f)(k)=\tau\end{array}}q_K^{-\nu(k)} \int_{u\in(\cO_K^\times)^n}
\psi_K\big( y \pi_K^{N(f)(k)}(f_\tau(u)+ \pi_K(...))\big)|du|,
\end{equation}
with $\cO_K^\times$ the group of units in $\cO_K$. Because of the
nondegenerateness assumptions, for $\tau$ any face of $\Delta_0(f)$
and when the residue field characteristic of $K$ is big enough, the
reduction $f_\tau\bmod \cM_K$ has no critical points on
$(\FF_{q_K}^\times)^n$ (this holds indeed for all faces and not only
for the compact faces by the quasi-homogeneity of $f$). Hence, the
integral in (\ref{ein}) is zero whenever $\ord_K(y)+N(f)(k)\leq -2$.
 When $\ord_K(y)+N(f)(k)\geq 0$, the integral over $(\cO_K^\times)^n$ in
(\ref{ein}) is just the measure of $(\cO_K^\times)^n$ and thus
equals $(1-q_K^{-1})^n$. When $\ord_K(y)+N(f)(k)=-1$, the integral
over $(\cO_K^\times)^n$ in (\ref{ein}) equals $q_K^{-n}(q_K-1)^n
E(K,y,h,\psi_K)$.
 Equation (\ref{DF}) now follows from the fact that $S_{f,K,\psi_K}(y)$ equals (\ref{ein}).

Equation (\ref{DF2}) follows in a similar way. Namely, one writes
$$
T_{g,K,\psi_K}(y) = \sum_{\small \mbox{$\tau$ face of
$\Delta_0(g)$}}\sum_{\small
\begin{array}{c}k\in (\NN\setminus\{0\})^n
\\ F(g)(k)=\tau\end{array}}
 \int_{\ord_K\, x=k}\psi_K(y g(x))|dx|,
$$
which equals
$$
T_{g,K,\psi_K}(y) = \sum_{\small \mbox{$\tau$ compact face of
$\Delta_0(g)$}}\sum_{\small
\begin{array}{c}k\in \NN^n
\\ F(g)(k)=\tau\end{array}}
 \int_{\ord_K\, x=k}\psi_K(y g(x))|dx|,
$$
and one proceeds as for $f$.

\end{proof}

\section{Bounds for $\nu(k)$, $A_f$, $B_f$, $A_g$, and $B_g$}

We recall two result from \cite{CDenSper}, about lower bounds for
$\nu(k)$ in terms of $N(f)(k)$ and $N(g)(k)$, and upper bounds for
$A_f$, $B_f$, $A_g$, and $B_g$.

\begin{prop}[\cite{CDenSper}, Theorem 4.1]\label{len}
Let $h$ be any nonzero polynomial in $n$ variables with $h(0)=0$.
Let $\tau$ be a face of $\Delta_0(h)$. Then one has for all $k$ in
$\NN^n$ with $F(h)(k)=\tau$ that
\begin{equation}\label{enu0}
\nu(k)\geq \sigma(h)\big(N(h)(k) + 1\big) - \sigma(h_\tau).
\end{equation}
\end{prop}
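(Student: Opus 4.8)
The plan is to pair the diagonal point of $\Delta_0(h)$ against $k$, exploit an integrality gap, and reduce to a combinatorial inequality about convex representations.

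\emph{Reduction.} Since $P_0:=(1/\sigma(h),\ldots,1/\sigma(h))$ lies in $\Delta_0(h)$, write $P_0=\sum_c\eta_c j^{(c)}+w$ with $j^{(c)}\in\Supp(h)$, $\eta_c>0$, $\sum_c\eta_c=1$, $w\in\RR_+^n$. Pair with $k$: for $j^{(c)}\in\tau$ one has $k\cdot j^{(c)}=N(h)(k)$ because $\tau=F(h)(k)$, while for $j^{(c)}\notin\tau$ the integer $k\cdot j^{(c)}$ exceeds $N(h)(k)$, hence is $\geq N(h)(k)+1$; since moreover $k\cdot w\geq0$ and $N(h)(k)$ equals $k\cdot i$ for a vertex $i$ of $\Delta_0(h)$ (so it is a nonnegative integer),
$$\frac{\nu(k)}{\sigma(h)}=k\cdot P_0\ \geq\ N(h)(k)+\mu,\qquad\mu:=\textstyle\sum_{j^{(c)}\notin\tau}\eta_c .$$
Hence $\nu(k)\geq\sigma(h)\bigl(N(h)(k)+\mu\bigr)$ for every representation of $P_0$, and it suffices to produce one with $\mu\geq1-\sigma(h_\tau)/\sigma(h)$ — equivalently, writing $\lambda_0:=\sigma(h_\tau)/\sigma(h)\in(0,1]$, a representation carrying total weight $\leq\lambda_0$ on the monomials of $h$ lying in $\tau$.

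\emph{An easy case.} Suppose the diagonal meets $\tau$. Writing $\tau=\Delta_0(h)\cap\{L=0\}$ for a supporting form $L$ (which has nonnegative coefficients because $\Delta_0(h)+\RR_+^n=\Delta_0(h)$), the inequality $s\geq1/\sigma(h)$ for the diagonal point $(s,\ldots,s)\in\tau$ forces $L$ to vanish at $(1/\sigma(h),\ldots)$ as well, hence on $F_0(h)$, so $F_0(h)\subseteq\tau$. Then $(1/\sigma(h),\ldots)\in F_0(h)\subseteq\tau\subseteq\Delta_0(h_\tau)$ (the last inclusion because the vertices of $\tau$ are monomials of $h_\tau$ and its recession directions lie in $\RR_+^n$), which forces $\sigma(h_\tau)=\sigma(h)$, so $\lambda_0=1$ and we are done with $\mu=0$. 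Assume therefore that the diagonal misses $\tau$; then $(1/\sigma(h_\tau),\ldots)\notin\tau$, so in any decomposition $(1/\sigma(h_\tau),\ldots)=r_0+w_0$ with $r_0\in\mathrm{conv}(\Supp(h_\tau))\subseteq\tau$ and $w_0\in\RR_+^n$ one must have $w_0\neq0$: there is genuine slack below the diagonal point of $\Delta_0(h_\tau)$ to be converted into weight on off-$\tau$ monomials.

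\emph{The core and the main obstacle.} The sought representation exists if and only if $P_0\in\lambda_0\,\Delta_0(h_\tau)+(1-\lambda_0)\,\Delta_0(h-h_\tau)$ (one may assume $\lambda_0<1$, so $h-h_\tau\neq0$; both polyhedra have recession cone $\RR_+^n$, so this Minkowski combination is again a polyhedron). By linear-programming duality this membership is equivalent to the estimate
$$\frac{\nu_L}{\sigma(h)}\ \geq\ \lambda_0\min_{\Delta_0(h_\tau)}L+(1-\lambda_0)\min_{\Delta_0(h-h_\tau)}L,$$
uniformly over linear forms $L=\sum_iL_ix_i$ with all $L_i\geq0$, where $\nu_L:=\sum_iL_i$. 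This is the heart of the matter and, I expect, the main difficulty: the separate bounds $\min_{\Delta_0(h_\tau)}L\leq\nu_L/\sigma(h_\tau)$ and $\min\bigl(\min_{\Delta_0(h_\tau)}L,\min_{\Delta_0(h-h_\tau)}L\bigr)=\min_{\Delta_0(h)}L\leq\nu_L/\sigma(h)$ are each too lossy, so one must use simultaneously a point of $\Delta_0(h_\tau)$ minimizing $L$ and an off-$\tau$ monomial minimizing $L$, together with the convexity of $\Delta_0(h)$, to see that the convex combination on the right cannot exceed $\nu_L/\sigma(h)$; the precise value $\lambda_0=\sigma(h_\tau)/\sigma(h)$ is exactly what makes the two contributions balance (e.g.\ for $h=x_1^2+\cdots+x_n^2$ with $\tau$ a vertex the estimate is an equality, a convenient check). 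Unwinding the reduction then gives $\nu(k)\geq\sigma(h)\bigl(N(h)(k)+1\bigr)-\sigma(h_\tau)$.
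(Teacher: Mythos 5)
The opening reduction is fine: writing $P_0=\sum_c\eta_c j^{(c)}+w$ and pairing with $k$ does give $\nu(k)\geq\sigma(h)(N(h)(k)+\mu)$ with $\mu$ the off-$\tau$ weight, so a single representation of $P_0$ with $\mu\geq 1-\lambda_0$ (where $\lambda_0=\sigma(h_\tau)/\sigma(h)$) would finish the proof, and your ``easy case'' is correctly handled. But the proof stops at the crucial step: you explicitly leave the ``heart of the matter'' as a belief, not an argument. Worse, the step you reduce to is \emph{false}. The claimed equivalence between ``there is a representation carrying weight $\leq\lambda_0$ on the $\tau$-monomials'' and the Minkowski membership $P_0\in\lambda_0\Delta_0(h_\tau)+(1-\lambda_0)\Delta_0(h-h_\tau)$ does not hold: the forward direction only works when the weight is \emph{exactly} $\lambda_0$, and nothing lets you push extra weight from the off-$\tau$ side onto $\Delta_0(h_\tau)$, since that polyhedron is bounded away from the origin in the relevant directions.

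Concretely, take $n=2$, $h=x_1^3+x_1x_2+x_2^3$, $\tau=\{(3,0)\}$. Then $P_0=(1,1)$, $\sigma(h)=1$, $h_\tau=x_1^3$, $\sigma(h_\tau)=1/3$, so $\lambda_0=1/3$. A representation of $P_0$ with $\mu=1$ (hence $\geq 1-\lambda_0$) certainly exists, namely $P_0=(1,1)\in\Supp(h)\setminus\tau$ itself, and one checks directly that \eqref{enu0} holds for all $k$ with $F(h)(k)=\tau$ (e.g.\ $k=(2,5)$ gives $7\geq 7-1/3$). However
$$
\tfrac13\Delta_0(x_1^3)+\tfrac23\Delta_0(x_1x_2+x_2^3)=\mathrm{conv}\{(5/3,2/3),(1,2)\}+\RR_+^2,
$$
and $(1,1)$ does not lie in this set (its first coordinate forces the conv-hull point to be $(1,2)$, whose second coordinate is too large). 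Accordingly your dual LP estimate also fails, for instance at $L=(1,1)$: the left side is $2$ while the right side is $1\cdot 1+\tfrac23\cdot 2=7/3$. So the object you propose to prove by duality is not true, and trying to ``balance the two contributions'' cannot succeed. You need a different mechanism to produce the representation with $\mu\geq 1-\lambda_0$ (or a pairing argument that uses the constraint $F(h)(k)=\tau$ on $k$ directly, rather than replacing it by a statement about all nonnegative linear forms $L$).
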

The main point in Proposition \ref{len}, and one of the main
differences with the approach from \cite{DenSper}, is that one
subtracts $\sigma(h_\tau)$ on the right hand side of (\ref{enu0}).
Subtracting $\sigma(h)$ would yield trivial bounds since one has
$\nu(k)\geq \sigma(h) N(h)(k)$ for all $k\in \NN^n_+$.

The bounds for $A_f$ and $A_g$ in Proposition \ref{AB} are
essentially proven in \cite{DenSper} and follow from Lemma (3.3) of
\cite{DenSper}, which is recalled in Lemma \ref{l3.3} below (in
\cite{CDenSper} the proof of these bounds is repeated from
\cite{DenSper}). The bounds for $B_f$ and $B_g$ are essentially
proven in \cite{CDenSper}, and follow from Lemma (3.3) of
\cite{DenSper} and Proposition \ref{len}.

\begin{prop}[\cite{CDenSper}, \cite{DenSper}]\label{AB}
Let $f$, $g$, $A$, and $B$ be as in Proposition \ref{r21}. Then
there exists a real number $c>0$ such that for all $K$ as in
\ref{fg}, all faces $\tau$ of $\Delta_0(f)$, resp.~of $\Delta_0(g)$,
and all $y$ in $K$ with $\ord_K(y)<0$,
\begin{equation}\label{eAe}
A_f(K,y,\tau)\leq c|y|_K^{-\sigma(f)}|\ord_K(y) |^{\kappa(f)-1}
\end{equation}
and
\begin{equation}\label{eBe}
B_f(K,y,\tau)\leq c|y|_K^{-\sigma(f)} q_K^{
\sigma(f_\tau)}|\ord_K(y) |^{\kappa(f)-1},
\end{equation}
resp.,
\begin{equation}\label{eCe}
A_g(K,y,\tau)\leq c|y|_K^{-\sigma(g)}|\ord_K(y) |^{\kappa(g)-1}
\end{equation}
and
\begin{equation}\label{eDe}
B_g(K,y,\tau)\leq c|y|_K^{-\sigma(g)}q_K^{ \sigma(f_\tau)}|\ord_K(y)
|^{\kappa(g)-1}.
\end{equation}
Moreover, one can choose $c$ depending on $\Delta_0(f)$ and
$\Delta_0(g)$ only.
\end{prop}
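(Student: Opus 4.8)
The plan is to reduce all four estimates to the combinatorics of $\Delta_0(f)$ and $\Delta_0(g)$ by the rational-generating-function method of \cite{DenSper}, sharpening the outcome for the $B$-bounds by means of Proposition \ref{len}. Throughout I fix $K$ (with residue cardinality $q:=q_K\ge 2$), a face $\tau$ of $\Delta_0(f)$ (resp.\ a compact face $\tau$ of $\Delta_0(g)$), and $y\in K$ with $\ord_K(y)<0$, and I put $m:=|\ord_K(y)|>0$, so that $|y|_K=q^m$ and $|y|_K^{-\sigma(f)}=q^{-\sigma(f)m}$; I argue for $f$, the case of $g$ being verbatim the same with the sums running over compact faces only.

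\emph{The $A$-bounds.} These are essentially the estimates of \cite{DenSper}, which I would recall as follows. The set $C_\tau:=\{k\in\RR_{\ge 0}^n : F(f)(k)=\tau\}$ is a relatively open rational polyhedral cone on which $N(f)$ restricts to the linear form $k\mapsto k\cdot v$ for $v$ any vertex of $\tau$. Decomposing $C_\tau\cap\NN^n$ into a finite disjoint union of translates of free sub-semigroups $\NN v_1+\cdots+\NN v_d$ with the $v_i$ primitive lattice vectors in the closure of $C_\tau$, the series $G_\tau(S):=\sum_{k\in C_\tau\cap\NN^n}q^{-\nu(k)}S^{N(f)(k)}$ becomes a product of geometric series; since $\nu(v_i)\ge\sigma(f)N(f)(v_i)$ for each $v_i$ (the trivial bound recalled after Proposition \ref{len}), the pole of $G_\tau$ closest to the origin sits at $S=q^{\sigma(f)}$, and Lemma (3.3) of \cite{DenSper} (recalled below as Lemma \ref{l3.3}) bounds its order by $\kappa(f)$. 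A partial-fraction expansion then gives $[S^j]\,G_\tau(S)\le c\,q^{-\sigma(f)j}(j+1)^{\kappa(f)-1}$; summing over $j\ge m$ yields (\ref{eAe}), and (\ref{eCe}) is identical over the compact faces of $\Delta_0(g)$.

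\emph{The $B$-bounds.} Here every $k$ contributing to $B_f(K,y,\tau)$ has $F(f)(k)=\tau$ and $N(f)(k)=m-1$, so Proposition \ref{len} gives $\nu(k)\ge\sigma(f)m-\sigma(f_\tau)$. I would factor $q^{-\nu(k)}=q^{\sigma(f_\tau)}q^{-\sigma(f)m}q^{-r(k)}$ with $r(k):=\nu(k)-\sigma(f)m+\sigma(f_\tau)\ge 0$, obtaining $B_f(K,y,\tau)=q^{\sigma(f_\tau)}|y|_K^{-\sigma(f)}\Sigma_\tau$, where $\Sigma_\tau:=\sum q^{-r(k)}$ runs over the $k\in\NN^n$ with $F(f)(k)=\tau$ and $N(f)(k)=m-1$. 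It then remains to show $\Sigma_\tau\le c\,m^{\kappa(f)-1}$: on $C_\tau$ one has $r(k)=\nu(k)-\sigma(f)(N(f)(k)+1)+\sigma(f_\tau)$, so $\Sigma_\tau$ is the coefficient of $S^{m-1}$ in $q^{\sigma(f)-\sigma(f_\tau)}G_\tau(q^{\sigma(f)}S)$, whose pole at $S=1$ has order at most $\kappa(f)$ by Lemma \ref{l3.3}, giving (\ref{eBe}); (\ref{eDe}) follows likewise over the compact faces. The point of invoking Proposition \ref{len} rather than the trivial bound $\nu(k)\ge\sigma(f)N(f)(k)$ — which would only deliver the coarser prefactor $q^{\sigma(f)}$, essentially $B_f\le A_f$ at level $m-1$ — is precisely that it produces the sharper $q^{\sigma(f_\tau)}$ (a genuine gain, since $\sigma(f_\tau)\le\sigma(f)$), which is what will make the later pairing of $B_f$ with the Katz-type bound for $E(K,y,f_\tau,\psi_K)$ effective. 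Finally, all constants above are determined by the normal fans, the generators $v_i$, and the numbers $\sigma(\cdot),\kappa(\cdot)$ — hence by $\Delta_0(f)$ and $\Delta_0(g)$ alone — and are uniform in $q$ because replacing $q$ by $2$ only enlarges each summand $q^{-(\ge 0)}$; so a single $c$ works.

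The hard part is the combinatorial core, Lemma \ref{l3.3}: establishing that in both cases the pole at $S=1$ has order at most $\kappa(f)$ — the codimension of the single face $F_0(f)$ meeting the diagonal — uniformly over \emph{all} faces $\tau$, including high-codimension faces whose cones $C_\tau$ are unbounded along the level sets of $N(f)$ but on which the weight $q^{-\nu(k)}$ still forces convergence. This is exactly where the diagonal enters the definitions of $\sigma(f)$ and $\kappa(f)$, and it is the ingredient imported from \cite{DenSper}.
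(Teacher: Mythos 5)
Your argument matches the paper's proof in substance: both reduce the $A$-bounds to Lemma \ref{l3.3} with $\gamma=0$ (the trivial bound $\nu(k)\geq\sigma N(k)$) and obtain the $B$-bounds by applying the same lemma with $\gamma=\sigma-\sigma(f_\tau)\geq0$, which is exactly the sharpening supplied by Proposition \ref{len}. The generating-function/partial-fraction framing you use is merely an unpacking of the proof of Lemma \ref{l3.3} itself; the paper invokes that lemma directly with $C$ the closure of the convex hull of $\{0\}\cup\{k\in\NN^n:F(f)(k)=\tau\}$ and the observation $e=\dim\{k\in C:\nu(k)=\sigma N(k)\}\leq\kappa$, so this is not a genuinely different route.
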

\begin{proof}
We give the proof for $B_g$ for the convenience of the reader. The
proof for $B_f$ is similar and the proofs for $A_f$ and $A_g$ can be
found in \cite{DenSper}. To derive (\ref{eDe}) from Lemma \ref{l3.3}
and Proposition \ref{len}, use for $C$ the topological closure of
the convex hull of $\{0\}\cup \{k\in\NN^n\mid F(g)(k)= \tau\}$, and
note that $C^{\rm int}\cap\NN^n= \{k\in\NN^n\mid F(g)(k)= \tau\}$.
Clearly $\kappa(g)\geq 1$ and $\kappa(g)\geq \dim \{k\in C \mid
\nu(k)=N(g)(k)\sigma\}$. By (\ref{enu0}), $\nu(k)\geq
N(g)(k)\sigma(g)+ \sigma(g) - \sigma(g_\tau)$ for all $k\in C^{\rm
int}\cap\NN^n$. So for $\gamma$ one can take
$\sigma(g)-\sigma(g_\tau)$ which is nonnegative.
\end{proof}

\begin{lem}[\cite{DenSper}, Lemma (3.3)]\label{l3.3}
Let $C$ be a convex polyhedral cone in $\RR_+^n$ generated by
vectors in $\NN^n$, and let $L$ be a linear form in $n$ variables
with coefficients in $\NN$. We denote by $C^{\rm int}$ the interior
of $C$ in the sense of Newton polyhedra. Let $\sigma>0$ and
$\gamma\geq 0$ be real numbers satisfying
\begin{equation}
\nu(k)\geq L(k)\sigma + \gamma,\ \mbox{ for all }k\in C^{\rm
int}\cap \NN^n.
\end{equation}
Put
$$
e=\dim \{k\in C \mid \nu(k)=L(k)\sigma\}.
$$
Then there exists a real number $c>0$ such that for all $m\in\NN$
and for all $q\in\RR$ with $q\geq 2$,
\begin{equation}
\sum_{\begin{array}{c}k\in C^{\rm int}\cap\NN^n
\\ L(k)=m\end{array}} q^{-\nu(k)}\leq
cq^{-m\sigma-\gamma}(m+1)^{\max(0,e-1)}.
\end{equation}
\end{lem}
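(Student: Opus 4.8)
The plan is to reduce the sum to a finite sum of convergent geometric series by triangulating the cone $C$ and keeping track, on each simplicial piece, of which generators make the inequality $\nu\geq\sigma L$ an equality. First I would note that $\nu-\sigma L$ is a homogeneous linear form which is $\geq 0$ on $C^{\rm int}\cap\NN^n$ by hypothesis (since $\gamma\geq 0$); it is then $\geq 0$ on the relative interior of $C$ (a point there is a limit of positive rational multiples of lattice points of $C^{\rm int}$), hence on all of $C$ by continuity. Therefore $F:=\{k\in C\mid \nu(k)=\sigma L(k)\}$ is the face of $C$ cut out by the supporting hyperplane $\ker(\nu-\sigma L)$, and $e=\dim F$. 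Next I fix once and for all a triangulation of $C$ into finitely many simplicial rational cones; this decomposes $C^{\rm int}\cap\NN^n$ into a finite disjoint union of pieces of the form $k_0+\sum_{i=1}^d\NN v_i$, where $v_1,\dots,v_d\in\NN^n$ are linearly independent (the generators of one simplicial cone of the triangulation), $k_0$ runs over a finite set of lattice points, and each $v_i$ lies in $C$. On such a piece, call a generator $v_i$ \emph{tight} if $\nu(v_i)=\sigma L(v_i)$ and \emph{slack} otherwise. Since a tight generator lies in $F$ and the $v_i$ of a given piece are linearly independent, each piece has at most $e$ tight generators; moreover a tight generator satisfies $L(v_i)\geq 1$, since $\nu(v_i)\geq 1$ and $\sigma>0$ force $L(v_i)\neq 0$. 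Let $\delta>0$ be the minimum of $\nu(v_i)-\sigma L(v_i)$ over the finitely many slack generators occurring in all the pieces.

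The heart of the argument is the estimate for one piece $k_0+\sum_i\NN v_i$. Put $a_i=L(v_i)$ and, for $n\in\NN^d$, $k=k_0+\sum_i n_iv_i$. If $k$ is in this piece and $L(k)=m$, then $q^{-\nu(k)}=q^{-\sigma m}\,q^{-(\rho_0+\rho(n))}$, where $\rho_0:=\nu(k_0)-\sigma L(k_0)\geq 0$ and $\rho(n):=\sum_{i\ \mathrm{slack}}n_i(\nu(v_i)-\sigma L(v_i))\geq\delta\sum_{i\ \mathrm{slack}}n_i$; and since $k\in C^{\rm int}\cap\NN^n$ the hypothesis gives $\rho_0+\rho(n)\geq\gamma$. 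I would split the $n$-sum according to whether $\rho(n)<\gamma$ or $\rho(n)\geq\gamma$. If $\rho(n)<\gamma$, then $\sum_{i\ \mathrm{slack}}n_i<\gamma/\delta$, so the slack coordinates of $n$ take only finitely many values (a number bounded independently of $m$ and $q$); for each of these, the tight coordinates of $n$ with $L(k)=m$ satisfy a single linear equation $\sum_{i\ \mathrm{tight}}n_ia_i=M'$ with $0\leq M'\leq m$ and all $a_i\geq 1$, so there are at most $(m+1)^{\max(0,e-1)}$ of them, and each corresponding term is $\leq q^{-\sigma m-\gamma}$ because $\rho_0+\rho(n)\geq\gamma$. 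If $\rho(n)\geq\gamma$, then using $q\geq 2$ one has $q^{-\nu(k)}\leq q^{-\sigma m}q^{-\rho(n)}\leq q^{-\sigma m-\gamma}\,2^{\gamma}\,2^{-\rho(n)}\leq q^{-\sigma m-\gamma}\,2^{\gamma}\,2^{-\delta\sum_{i\ \mathrm{slack}}n_i}$; summing $2^{-\delta\sum_{i\ \mathrm{slack}}n_i}$ over all values of the slack coordinates gives a convergent geometric product, and for each such value the tight coordinates with $L(k)=m$ again number at most $(m+1)^{\max(0,e-1)}$. In both cases the total contribution of the piece is $\leq c'\,q^{-\sigma m-\gamma}(m+1)^{\max(0,e-1)}$ with $c'$ independent of $m$ and $q$, and summing over the finitely many pieces gives the lemma, with $c$ depending only on $C$, $L$, $\sigma$ and $\gamma$.

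The step I expect to be the main obstacle is producing the factor $q^{-\gamma}$ while retaining enough geometric decay in the slack directions to keep the polynomial factor down to $(m+1)^{e-1}$ rather than the naive $(m+1)^{n-1}$; this is exactly why the $n$-sum must be split at $\rho(n)=\gamma$, so that the finitely many terms where the slack decay is ``used up'' in producing $q^{-\gamma}$ are nonetheless controlled by the codimension-based count of tight lattice points. A minor technical point is the clean disjoint decomposition of $C^{\rm int}\cap\NN^n$ coming from the triangulation, including the simplicial cones of the triangulation that lie on an interior wall of $C$.
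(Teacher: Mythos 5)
The paper does not give a proof of this lemma; it cites it as Lemma~(3.3) of Denef--Sperber, so there is no proof of record here to compare against. Judged on its own terms, your argument is correct and is the expected one for such cone sums: triangulate, split generators into tight (those lying in $F=\{\nu=\sigma L\}$) and slack, bound the number of tight lattice points on the affine slice $L=m$ by $(m+1)^{\max(0,e-1)}$ using $L(v_i)\geq 1$, and absorb the slack directions into a geometric series, using $q\geq 2$ to make the constant uniform. The one remark worth making is that your case split at $\rho(n)=\gamma$ is actually superfluous: since the base point $k_0$ of each half-open piece already lies in $C^{\rm int}\cap\NN^n$ (you must take the parallelepiped $\{\sum t_iv_i:0<t_i\leq 1\}$ for the decomposition to be disjoint and to avoid the origin), the hypothesis applied to $k=k_0$ gives $\rho_0=\nu(k_0)-\sigma L(k_0)\geq\gamma$ directly, so $q^{-\rho_0-\rho(n)}\leq q^{-\gamma}q^{-\rho(n)}\leq q^{-\gamma}2^{-\delta\sum_{\mathrm{slack}}n_i}$ in one stroke, without splitting. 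Your version is still valid --- it just replaces this observation with a two-case estimate --- so this is a simplification, not a correction. The only genuinely delicate point, which you flag, is that the half-open decomposition of $C^{\rm int}\cap\NN^n$ must include the relative interiors of the lower-dimensional interior walls of the triangulation; you state this correctly and the cone-by-cone estimate applies verbatim to those pieces.
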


\section{Estimates for finite field exponential sums}\label{sec:Katz}

For each big enough prime $p$, let $\psi_p$ be a nontrivial additive
character from $\FF_p$ to $\CC^\times$ and for each power $q$ of
$p$, let $\psi_q$ be the additive character from $\FF_q$ to
$\CC^\times$ which is the composition of $\psi_p$ with the trace of
$\FF_q$ over $\FF_p$.

\begin{lem}\label{sigfg}
Let $f$ be a polynomial over some number field in the $n$ variables
$x_1,\ldots,x_n$. (Thus $f$ is not necessarily quasi-homogeneous.)
Let $g(x_1,\ldots,x_n,y)$ be the polynomial $f(x_1y,x_2,\ldots,x_n)$
in the $n+1$ variables $(x,y)$. Suppose that $f(0)=g(0)=0$. Then
$\sigma(f)=\sigma(g)$. Moreover, if $f$ is nondegenerate
w.r.t.~$\Delta_0(f)$, then $g$ is nondegenerate
w.r.t.~$\Delta_0(g)$.
\end{lem}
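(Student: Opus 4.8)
The plan is to analyze how the Newton polyhedron changes under the substitution $x_1 \mapsto x_1 y$. First I would set up the bookkeeping. Write $f(x) = \sum_{i} a_i x^i$, so that $g(x,y) = \sum_i a_i x_1^{i_1} y^{i_1} x_2^{i_2}\cdots x_n^{i_n}$, i.e. the exponent vector $i=(i_1,\dots,i_n) \in \NN^n$ of a monomial of $f$ is sent to the exponent vector $\phi(i):=(i_1,i_2,\dots,i_n,i_1)\in\NN^{n+1}$ of the corresponding monomial of $g$. Thus $\Supp(g) = \phi(\Supp(f))$, where $\phi:\RR^n\to\RR^{n+1}$ is the injective linear map $(t_1,\dots,t_n)\mapsto(t_1,\dots,t_n,t_1)$. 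Since $\phi$ is linear and injective it commutes with taking convex hulls, so $\Delta_{\rm global}(g) = \phi(\Delta_{\rm global}(f))$; and since $\phi(\RR_+^n)\subseteq\RR_+^{n+1}$ and more precisely $\Delta_0(g) = \phi(\Delta_{\rm global}(f)) + \RR_+^{n+1}$, I would want to relate this to $\phi(\Delta_0(f)) + \RR_+^{n+1}$. The key elementary observation is that $\RR_+^{n+1} = \phi(\RR_+^n) + \RR_{+}\cdot e_{n+1} + \big(\text{the cone spanned by } e_1,\dots\big)$ — more carefully, every point of $\RR_+^{n+1}$ is $\phi(v) + w$ with $v\in\RR_+^n$, $w\in\RR_+^{n+1}$ (e.g. $v=(u_1,\dots,u_n)$, $w = (0,\dots,0,u_{n+1}-u_1)$ when $u_{n+1}\geq u_1$, and otherwise absorb the difference into the first coordinate). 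This shows $\Delta_0(g) = \phi(\Delta_0(f)) + \RR_+^{n+1}$, so in fact $\Delta_0(g)$ is obtained from $\phi(\Delta_0(f))$ by adding the one extra nonnegative ray $e_{n+1}$ (the other directions of $\RR_+^{n+1}$ already lie in $\phi(\Delta_0(f))$'s recession cone).

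Next I would compute $\sigma(g)$. By definition $(1/\sigma(g),\dots,1/\sigma(g))\in\RR^{n+1}$ is the point where the diagonal of $\RR^{n+1}$ meets $\Delta_0(g)$, and $(1/\sigma(f),\dots,1/\sigma(f))\in\RR^n$ is where the diagonal of $\RR^n$ meets $\Delta_0(f)$. The point $\phi(1/\sigma(f),\dots,1/\sigma(f)) = (1/\sigma(f),\dots,1/\sigma(f))$ is exactly the diagonal point of $\RR^{n+1}$ at level $1/\sigma(f)$, and it lies in $\phi(\Delta_0(f))\subseteq\Delta_0(g)$. For the reverse — that no diagonal point with strictly larger entries lies in $\Delta_0(g)$ — I would argue via the supporting linear form: if $L(t)=b_0+\sum_{i=1}^n b_i t_i$ with $b_i\geq 0$ supports the face $F_0(f)$ of $\Delta_0(f)$ (so $L\geq 0$ on $\Delta_0(f)$ and $L(1/\sigma(f),\dots) = 0$), then on $\RR^{n+1}$ define $\tilde L(t_1,\dots,t_{n+1}) := L(t_1,\dots,t_n)$; this form is nonnegative on $\phi(\Delta_0(f))$, nonnegative on the extra ray $e_{n+1}$ (its coefficient there is $0\geq 0$), hence nonnegative on $\Delta_0(g)$, and it vanishes exactly at the level-$1/\sigma(f)$ diagonal point. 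Since $\tilde L$ is strictly positive on diagonal points of level $> 1/\sigma(f)$ that are "above" $F_0(f)$, the intersection of the diagonal with $\Delta_0(g)$ occurs at level $1/\sigma(f)$, giving $\sigma(g) = \sigma(f)$. (One must check $\tilde L$ actually cuts out a proper face, i.e. that the diagonal does meet $\Delta_0(g)$ at that level and not earlier — but that is immediate since the level-$1/\sigma(f)$ point lies in $\Delta_0(g)$ and all smaller diagonal points have some negative $\tilde L$-value or lie outside, inherited from the $\RR^n$ situation.)

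Finally, for nondegeneracy: the compact faces $\tau$ of $\Delta_0(g)$ need to be identified. Because $\Delta_0(g) = \phi(\Delta_0(f)) + \RR_+ e_{n+1}$ and $\phi$ is an affine isomorphism onto its image, a compact face of $\Delta_0(g)$ is either (i) the image $\phi(\tau)$ of a compact face $\tau$ of $\Delta_0(f)$ that is "not visible from the $e_{n+1}$ direction" (i.e. supported by a form with nonpositive last coefficient), in which case $g_{\phi(\tau)}(x,y) = f_\tau(x_1 y, x_2,\dots,x_n)$, or (ii) a face of the form where the last coordinate is not constrained, handled similarly. In case (i), if $(x^0,y^0)\in(\CC^\times)^{n+1}$ were a critical point of $g_{\phi(\tau)}$, then computing partials via the chain rule, $\partial_{x_j} g_{\phi(\tau)} = (\partial_j f_\tau)(x_1^0 y^0, x_2^0,\dots)$ for $j\geq 2$, $\partial_{x_1} g_{\phi(\tau)} = y^0 (\partial_1 f_\tau)(\dots)$, and $\partial_y g_{\phi(\tau)} = x_1^0 (\partial_1 f_\tau)(\dots)$; since $x_1^0, y^0 \neq 0$ these force all partials $\partial_j f_\tau$ to vanish at the point $(x_1^0 y^0, x_2^0,\dots,x_n^0)\in(\CC^\times)^n$, contradicting nondegeneracy of $f$ with respect to $\tau$. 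I expect the bookkeeping of exactly which faces of $\Delta_0(g)$ arise (and that for each, $g_\tau$ is of the substituted-monomial form) to be the main technical nuisance — this is the place where one has to be careful that adding the ray $e_{n+1}$ does not create genuinely new compact faces whose $g_\tau$ is not a pullback of some $f_\tau$. The resolution is that any compact face $\tau$ of $\Delta_0(g)$ projects under $(t_1,\dots,t_{n+1})\mapsto(t_1,\dots,t_n)$ to a compact face of $\Delta_0(f)$ (since the only non-compact direction of $\Delta_0(g)$ beyond those coming from $\Delta_0(f)$ is $e_{n+1}$, which the projection kills), and $g_\tau$ is then the restriction of $g$ to the monomials lying over that face, which is precisely $f$-substituted; so nondegeneracy transfers monomial-face by monomial-face.
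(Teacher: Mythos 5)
Your approach matches the paper's: relate $\Delta_0(f)$ and $\Delta_0(g)$ via the exponent map $\phi(i)=(i,i_1)$ and its projection, and transfer nondegeneracy through the monomial substitution. Two things deserve attention.

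First, the intermediate claim that $\Delta_0(g)=\phi(\Delta_0(f))+\RR_+\,e_{n+1}$, and the accompanying remark that the other coordinate directions already lie in the recession cone of $\phi(\Delta_0(f))$, are false. The recession cone of $\phi(\Delta_0(f))$ is $\phi(\RR_+^n)=\{(v,v_1):v\in\RR_+^n\}$, which does not contain $e_1,\dots,e_n$. Concretely, for $f=x_1x_2$ one has $\Delta_0(g)=(1,1,1)+\RR_+^3$, which contains $(2,1,1)$, but $(2,1,1)\notin\phi(\Delta_0(f))+\RR_+e_{n+1}=\{(a,c):a\in\Delta_0(f),\,c\geq a_1\}$. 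Only the weaker identity $\Delta_0(g)=\phi(\Delta_0(f))+\RR_+^{n+1}$ holds. Fortunately every place you invoke the wrong decomposition can be replaced by the correct one: the form $\tilde L=L\circ\pi$ is still nonnegative on $\Delta_0(g)$ (compute $\tilde L(\phi(a)+w)=L(a)+\sum_{i\leq n}b_iw_i\geq0$), and the identification of compact faces goes through by pulling back a supporting form $L$ of a compact face $\tau$ of $\Delta_0(g)$ (necessarily with all linear coefficients strictly positive) along $\phi$ to a supporting form $\tilde L=L\circ\phi$ of a compact face $\tau'$ of $\Delta_0(f)$; one then checks $\tau\subset\phi(\RR^n)$ (its vertices lie in $\Supp(g)=\phi(\Supp(f))$) so that $\pi(\tau)=\tau'$ and $g_\tau(x,y)=f_{\tau'}(x_1y,x_2,\dots,x_n)$. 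So the outline survives, but the justification as written is built on an incorrect structural picture and should be repaired.

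Second, your proof of $\sigma(f)=\sigma(g)$ is more elaborate than needed. The paper observes simply that $P\in\Delta_0(f)$ implies $\phi(P)\in\Delta_0(g)$, and conversely $Q\in\Delta_0(g)$ implies $\pi(Q)\in\Delta_0(f)$; applying this to the two diagonal intersection points (which $\phi$ and $\pi$ carry to diagonal points of the same level) immediately gives both inequalities, with no need to exhibit a supporting hyperplane. Your chain-rule verification of nondegeneracy is the same argument the paper compresses into ``we performed a coordinate transformation on the torus $\GG_m^{n+1}$.''
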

\begin{proof}
Clearly, for any point $P=(p_{1},\ldots,p_n)$ on $\Delta_0(f)$, the
point $P'=(p_1,\ldots,p_n,p_1)$ lies on $\Delta_0(g)$. Indeed, this
holds for points $P$ in the support of $f$, thus also for $P$ in the
convex hull of the support of $f$, and thus for general $P$ in
$\Delta_0(f)$. Vice versa, for any point $Q=(q_1,\ldots,q_{n+1})$ on
$\Delta_0(g)$, the point $Q'=(q_1,\ldots,q_n)$ lies in
$\Delta_0(f)$. From this follows that $\sigma(f)=\sigma(g)$. The
statement about the nondegenerateness is immediate since we
performed a coordinate transformation on the torus $\GG_m^{n+1}$
induced by a module isomorphism of $\ZZ^{n+1}$.
\end{proof}

The following generalizes Corollary 6.4 of \cite{CDenSper} from the
homogeneous case to the quasi-homogeneous case. Corollary 6.4 of
\cite{CDenSper} is proven in \cite{CDenSper} using results by Katz
\cite{Katz}, by Segers \cite{Segers}, and by the author
\cite{Cigumodp}.

\begin{prop}\label{c:quasinondeg}
Let $f$ be as in \ref{fg}. (In particular, $f$ is
quasi-homogeneous.) Then there exists $a>0$ such that for each big
enough prime $p$, each power $q$ of $p$ such that $\FF_q$ is an
algebra over $\cO[1/N]$, one has
\begin{equation}\label{quasi1}
|\frac{1}{(q-1)^n}\sum_{x\in\GG_m^n(\FF_q)} \psi_q(f(x))| < a
q^{-\sigma(f)}
\end{equation}
and
\begin{equation}\label{quasi2}
|\frac{1}{q^n}\sum_{x\in\AA^n(\FF_q)} \psi_q(f(x))| < a
q^{-\sigma(f)},
\end{equation}
for all choices of $\psi_p$. Moreover, $a$ can be chosen depending
on $\Delta_0(f)$ only.
\end{prop}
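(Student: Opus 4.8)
The plan is to prove (\ref{quasi1}) first and then to deduce (\ref{quasi2}) from it. For (\ref{quasi1}) I would induct on the integer $D(f):=\sum_{i=1}^{n}(a_i-1)$, where $(a_1,\dots,a_n)$ is a tuple of positive integers with $f(x_1^{a_1},\dots,x_n^{a_n})$ homogeneous, fixed once and for all as a function of $\Delta_0(f)$ (say primitive); then $D(f)$ is bounded in terms of $\Delta_0(f)$ and vanishes exactly when $f$ is homogeneous. If $D(f)=0$, (\ref{quasi1}) is precisely the torus estimate of Corollary~6.4 of \cite{CDenSper}. If $D(f)>0$, after permuting variables we may assume $a_1\geq 2$ and set $g(x,y):=f(x_1y,x_2,\dots,x_n)$ in the $n+1$ variables $(x,y)$; then $g$ is quasi-homogeneous with weights $(a_1-1,a_2,\dots,a_n,1)$, so $D(g)=D(f)-1$, and by Lemma~\ref{sigfg} (whose proof also exhibits $\Delta_0(g)$ as determined by $\Delta_0(f)$) the polynomial $g$ is nondegenerate w.r.t.~$\Delta_0(g)$ and $\sigma(g)=\sigma(f)$. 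Since $x_1\mapsto x_1y$ is a bijection of $\GG_m(\FF_q)$ for each fixed $x_2,\dots,x_n,y\in\GG_m(\FF_q)$, one has
\[
\frac{1}{(q-1)^{n+1}}\sum_{(x,y)\in\GG_m^{n+1}(\FF_q)}\psi_q(g(x,y))=\frac{1}{(q-1)^{n}}\sum_{x\in\GG_m^{n}(\FF_q)}\psi_q(f(x)),
\]
so the bound for $g$ provided by the induction hypothesis, whose constant depends on $\Delta_0(g)$ and hence on $\Delta_0(f)$ only, is exactly (\ref{quasi1}) for $f$; the admissible primality bound propagates in the same way. After at most $D(f)$ such steps one reaches a homogeneous polynomial, proving (\ref{quasi1}).

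For (\ref{quasi2}) I would stratify $\AA^n(\FF_q)=\bigsqcup_{S\subseteq\{1,\dots,n\}}T_S$ with $T_S:=\{x:\,x_i\neq0\Leftrightarrow i\in S\}\cong\GG_m^{|S|}(\FF_q)$, noting that $f$ restricts on $T_S$ to $f_S$, the sum of the monomials of $f$ supported on $S$. When $f_S\neq 0$ it is again quasi-homogeneous (weights $(a_i)_{i\in S}$) and nondegenerate with respect to its Newton polyhedron $\Delta_0(f_S)=\Delta_0(f)\cap\{x_i=0:\,i\notin S\}$ — this slice is a face of $\Delta_0(f)$, and the fact that nondegeneracy is inherited by coordinate restrictions is standard — so (\ref{quasi1}) applies to $f_S$: the contribution of $T_S$ to $q^{-n}\sum_{x\in\AA^n(\FF_q)}\psi_q(f(x))$ has modulus at most a constant (depending on $\Delta_0(f_S)$ only) times $q^{|S|-n-\sigma(f_S)}$; when $f_S=0$ (hence $S$ proper) that contribution is at most $q^{|S|-n}$. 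In either case the exponent is $\leq -\sigma(f)$: for $f_S\neq 0$ this is the combinatorial inequality $\sigma(f)\leq (n-|S|)+\sigma(f_S)$, and for $f_S=0$ it is its degenerate analogue $\sigma(f)\leq n-|S|$ (which holds because then every monomial of $f$ involves a variable outside $S$, forcing $\Delta_0(f)\subseteq\{\sum_{i\notin S}x_i\geq 1\}$). Summing over the $2^n$ strata then gives (\ref{quasi2}), with a constant depending only on $\Delta_0(f)$ since the $\Delta_0(f_S)$ form a finite family determined by $\Delta_0(f)$.

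The step I expect to require the most care is the inequality $\sigma(f)\leq 1+\sigma(f|_{x_j=0})$, which, iterated over the variables set to zero, yields $\sigma(f)\leq(n-|S|)+\sigma(f_S)$. I would prove it directly on Newton polyhedra: if $(t,\dots,t)\in\Delta_0(f)$, write $(t,\dots,t)=r+\sum_\ell\lambda_\ell v_\ell$ with $r\in\RR_+^n$, the $v_\ell$ in $\Supp(f)$, $\lambda_\ell\geq 0$ and $\sum_\ell\lambda_\ell=1$, and let $\mu$ be the total mass of those $\ell$ with $(v_\ell)_j=0$ (the case $\mu=0$ being immediate, as then $t\geq 1$). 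Comparing $j$-th coordinates gives $t\geq 1-\mu$, while the renormalized combination $\mu^{-1}\sum_{\ell:(v_\ell)_j=0}\lambda_\ell v_\ell'$ (with $v_\ell'$ the exponent $v_\ell$ with its $j$-th entry deleted) lies in $\Delta_0(f|_{x_j=0})$ and is $\leq(t/\mu,\dots,t/\mu)$ componentwise, so $1/\sigma(f|_{x_j=0})\leq t/\mu\leq t/(1-t)$ and hence $t\geq 1/(1+\sigma(f|_{x_j=0}))$; taking the infimum over such $t$ gives the claim. Beyond this, the only thing to watch is the bookkeeping that the constant $a$ (together with the implied primality bound) remains a function of $\Delta_0(f)$ alone through the at most $D(f)$ substitutions in (\ref{quasi1}) and through all the coordinate slices in (\ref{quasi2}); as both are finite $\Delta_0(f)$-indexed families, this is routine rather than hard.
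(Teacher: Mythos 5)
Your proof of (\ref{quasi1}) follows the same route as the paper: iterate the torus substitution $x_1\mapsto x_1y$ of Lemma~\ref{sigfg} to reduce a quasi-homogeneous $f$ to a homogeneous polynomial in more variables, noting that the normalized $\GG_m$-sum, $\sigma$, and nondegeneracy are all preserved, and then invoke Corollary~6.4 of \cite{CDenSper} in the homogeneous case. Your induction on $D(f)=\sum(a_i-1)$ is just a formalization of the paper's ``after finitely many steps,'' and it is fine. One detail you gloss over: the paper treats the linear homogeneous case $f=\sum a_ix_i$ separately (by the trivial bound $\sigma(f)\leq n$), because Corollary~6.4 of \cite{CDenSper} is stated for nonlinear homogeneous $f$. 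Your base case $D(f)=0$ quietly subsumes linear $f$ under that citation, so you should either check that the cited corollary really covers that case or add the two-line direct estimate for $f$ linear, as the paper does.

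Your proof of (\ref{quasi2}) is correct but genuinely different from the paper's. The paper does not deduce (\ref{quasi2}) from (\ref{quasi1}) at all; it observes that (\ref{quasi2}) is not used anywhere in the paper and is itself a consequence of Theorem~\ref{mt1} (take $y$ of order $-1$, so that $S_{f,K,\psi_K}(y)$ reduces exactly to $q^{-n}\sum_{\AA^n(\FF_q)}\psi_q(f)$, and $|\ord_K(y)|^{\kappa(f)-1}=1$). That derivation is shorter but logically ``downstream'' of the main theorem. You instead give a self-contained finite-field argument: stratify $\AA^n(\FF_q)$ by the coordinate tori $T_S$, apply (\ref{quasi1}) to each nonzero restriction $f_S$ (which is indeed quasi-homogeneous and nondegenerate, with $\Delta_0(f_S)$ a face of $\Delta_0(f)$), and control the exponents via the combinatorial inequality $\sigma(f)\leq (n-|S|)+\sigma(f_S)$, which you prove directly from the Newton polyhedron description. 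I checked your proof of that inequality, including the case $f_S=0$, and it works. This makes the derivation of (\ref{quasi2}) independent of the $p$-adic integral formula, at the cost of the stratification and the extra combinatorial lemma; whether you prefer it to the paper's one-liner is a matter of taste, but both are valid.
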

\begin{proof}
We first prove (\ref{quasi1}). First suppose that $f$ is
homogeneous. If moreover $f(x)$ is of the form
$\sum_{i=1}^{n}a_ix_i$, (\ref{quasi1}) follows from the obvious
inequality $\sigma(f)\leq n$. If $f$ is nonlinear (that is, $f(x)$
is not of the form $\sum_{i=1}^{n}a_ix_i$) and homogeneous, then the
proposition holds by Corollary 6.4 of \cite{CDenSper}, see also
section 9 of \cite{CDenSper}. So, the case of homogeneous $f$ is
done. Now let $f$ be quasi-homogeneous. After finitely many steps as
from going from $f$ to $g$ in Lemma \ref{sigfg} and renumbering the
variables $x_i$, one can go from a quasi-homogeneous polynomial to a
homogeneous polynomial $h$ in possibly more variables. Going from
$f$ to $g$ as in Lemma \ref{sigfg} consists of two steps: first one
defines a polynomial $f_0(x,y)$ which equals $f(x)$, that is, one
considers $f$ as a polynomial in one more variable $y$. Then one
performs the transformation on the torus, $(x,y)$ to
$(x_1y,x_2,\ldots,x_n)$, to obtain $g$. Under both these steps, the
sum (\ref{quasi1}) remains unaltered, since one divides by $(q-1)^n$
with $n$ the number of variables in the left hand side of
(\ref{quasi1}), and since in the second step we just perform a
transformation on the torus. Lemma \ref{sigfg} together with the
homogeneous case now proves (\ref{quasi1}).

The inequality (\ref{quasi2}) is not used in this paper and follows
from (\ref{c2}) of Theorem \ref{mt1} with argument $y$ of order
$-1$.
\end{proof}

\section{A corollary of results by Katz}

The results in this section will only serve to prove Theorem
\ref{mt2}, not to prove Theorem \ref{mt1}.  In this section as well
as in Theorem \ref{mt2} we focus on quasi-homogeneous polynomials in
general (thus not necessarily non degenerated ones).

Call a collection of polynomials $f_i$ in $n$ variables
\emph{quasi-homogeneous with the same weights} if there are integers
$a_j>0$, not depending on $i$, such that
$f_i(x_1^{a_1},\ldots,x_n^{a_n})$ is homogeneous for each $i$.

\begin{lem}\label{intersect}
Let $f_i$ be quasi-homogeneous polynomials with the same weights,
for $i=1,\ldots,d$. Suppose that $f_d$ is nonconstant. Let $X$ be
the locus of the $f_i$ for $i=1,\ldots,d-1$ in $\AA^n$ and let $Y$
be the locus of the $f_i$ for $i=1,\ldots,d$ in $\AA^n$. Then
$$
\dim X\leq \dim Y+1,
$$
where the dimension of the empty scheme is $-1$.
\end{lem}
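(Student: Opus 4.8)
The plan is to realize $Y$ as the intersection of $X$ with the hypersurface $\{f_d=0\}$ and then invoke Krull's principal ideal theorem; quasi-homogeneity will enter only to guarantee that $\{f_d=0\}$ actually meets a component of $X$ of maximal dimension. First I would dispose of a degenerate case: if some $f_i$ with $i<d$ is a nonzero constant, then $X=\emptyset$, so $\dim X=-1\le\dim Y+1$ and there is nothing to prove. Otherwise each $f_i$ with $i<d$ is either the zero polynomial or a nonconstant quasi-homogeneous polynomial, and $f_d$ is nonconstant quasi-homogeneous; in all of these cases $f_i$ has no constant term (with all common weights $a_j>0$, the only monomial of weighted degree $0$ is the constant one), so $f_i(0)=0$. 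Hence $0\in Y\subseteq X$.

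Next I would record the torus action. Fix common positive weights $a_1,\dots,a_n$, so that every monomial of $f_i$ has the same $a$-weighted degree $e_i\ge 0$ and therefore $f_i(\lambda_t x)=t^{e_i}f_i(x)$ for the $\GG_m$-action $\lambda_t\colon(x_1,\dots,x_n)\mapsto(t^{a_1}x_1,\dots,t^{a_n}x_n)$. Consequently $X$ and $Y$ are $\GG_m$-stable closed subsets of $\AA^n$, and since every $a_j>0$ one has $\lambda_t(x)\to 0$ as $t\to 0$ for every point $x$. Because $\GG_m$ is geometrically irreducible, each irreducible component $Z$ of $X$ is $\GG_m$-stable: the closure of the image of $\GG_m\times Z$ under the action is an irreducible closed subset of $X$ containing $Z$, hence coincides with $Z$. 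Being closed and $\GG_m$-stable, $Z$ contains $\lim_{t\to 0}\lambda_t(x)=0$ for any $x\in Z$; thus $0\in Z$ for every component $Z$ of $X$. (One may first base change to an algebraic closure of the ground field, which affects none of the dimensions in the statement, and one may replace $X$ and $Y$ by their reductions.)

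Finally I would conclude by Krull's principal ideal theorem. Choose a component $Z$ of $X$ with $\dim Z=\dim X$; then $0\in Z$, and since $f_d(0)=0$ the set $Z\cap\{f_d=0\}$ is nonempty and, being contained in $X\cap\{f_d=0\}$, is contained in $Y$. If $f_d$ vanishes identically on $Z$, then $Z\subseteq Y$ and $\dim Y\ge\dim Z=\dim X\ge\dim X-1$. Otherwise $f_d$ is a nonzero element of the integral domain $\cO(Z)$ whose zero locus on $Z$ is nonempty, so by Krull's theorem every irreducible component of $Z\cap\{f_d=0\}$ has dimension $\dim Z-1$, whence $\dim Y\ge\dim Z-1=\dim X-1$. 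In either case $\dim X\le\dim Y+1$.

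I expect the middle step to carry the real content of the lemma. Without the cone structure the intersection $X\cap\{f_d=0\}$ could be empty even when $X$ is large — for instance if $f_d$ restricts to a nowhere-vanishing unit on the top-dimensional component of $X$ — and it is precisely quasi-homogeneity with strictly positive common weights that forces every component of $X$ through the origin, and hence through $\{f_d=0\}$, so that the principal ideal theorem applies with a nonempty zero locus.
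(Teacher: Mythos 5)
Your proof is correct. It takes a genuinely different (and more self-contained) route than the paper's: the paper first observes that the statement is ``clear by intersection theory'' in the homogeneous case, and then reduces the quasi-homogeneous case to the homogeneous one via the finite-to-one morphism $G:\AA^n\to\AA^n,\ x\mapsto(x_1^{a_1},\ldots,x_n^{a_n})$, using the fact that finite morphisms preserve dimension. You instead work directly with the quasi-homogeneous cone structure: you use the $\GG_m$-action $\lambda_t$ with strictly positive weights to show that every irreducible component of $X$ contains the origin (via the limit $\lambda_t(x)\to 0$ as $t\to 0$), then apply Krull's principal ideal theorem to the intersection with $\{f_d=0\}$, which is guaranteed nonempty because $f_d(0)=0$. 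In effect you have given a complete proof of the fact the paper leaves as ``clear,'' and applied it directly to the quasi-homogeneous situation rather than passing through the substitution $G$. What the paper's approach buys is brevity and a one-line reduction to a familiar statement; what yours buys is that it makes the mechanism explicit (the cone structure forces nonempty intersection at the origin) and avoids having to check that $G$ preserves dimensions. Both arguments are sound. Two small remarks: your degenerate-case preamble (nonzero constant $f_i$) is not strictly necessary, since a quasi-homogeneous polynomial of positive weighted degree automatically has no constant term, and a nonzero constant would indeed make $X$ empty as you note; and you are right to flag the base change to an algebraic closure and passage to reductions, since the $\cO(Z)$-is-a-domain step needs $Z$ geometrically irreducible and reduced.
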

\begin{proof}
Clearly this statement holds when the $f_i$ are homogeneous
polynomials, by intersection theory. Since the $f_i$ have the same
weights, let $a_{j}$, $j=1,\ldots,n$ be positive integers such that
each of the $f_i(x_1^{a_1},\ldots,x_n^{a_n})$ is homogeneous.
Consider the application
$$
G:\AA^n\to\AA^n:x\mapsto (x_1^{a_1},\ldots,x_n^{a_n}).
$$
The application $G$ is finite to one, hence $G^{-1}(X)$ has the same
dimension as $X$, and $G^{-1}(Y)$ has the same dimension as $Y$. By
the above statement for homogeneous polynomials, the dimension of
$G^{-1}(X)$ is less or equal than the dimension of $G^{-1}(Y)$ plus
1. The lemma is proven.
\end{proof}

\begin{cor}\label{lem:fg}
Let $f$ be a quasi-homogeneous polynomial over a field of
characteristic zero in the $n$ variables $x_1,\ldots,x_n$. Let
$g(x_1,\ldots,x_n,y)$ be the polynomial $f(x_1y,x_2,\ldots,x_n)$ in
$n+1$ variables. Let $C_f$ be the closed subscheme of $\AA^n$ given
by $\grad f=0$, and let $C_g$ be the closed subscheme of $\AA^{n+1}$
given by $\grad g=0$. Suppose that $C_f$ contains the point $0$.
Then
$$
\dim C_g\leq \dim C_f+1,
$$
where the dimension of the empty scheme is said to be $-1$.
\end{cor}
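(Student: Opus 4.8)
The plan is to express $C_g$ as a union of two closed subsets of $\AA^{n+1}$ and to bound the dimension of each of them by $\dim C_f+1$.

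Write $\Phi\colon\AA^{n+1}\to\AA^{n}$ for the morphism $(x_1,\ldots,x_n,y)\mapsto(x_1y,x_2,\ldots,x_n)$, so that $g=f\circ\Phi$. Setting $P:=(\partial f/\partial x_1)\circ\Phi$ and $Q_i:=(\partial f/\partial x_i)\circ\Phi$ for $2\le i\le n$, the chain rule gives $\partial g/\partial x_1=yP$, $\partial g/\partial x_i=Q_i$ for $2\le i\le n$, and $\partial g/\partial y=x_1P$. Hence, as a set,
$$
C_g=V(yP,\,x_1P,\,Q_2,\ldots,Q_n)=V(P,Q_2,\ldots,Q_n)\ \cup\ V(x_1,y,Q_2,\ldots,Q_n),
$$
using $V(yP,x_1P)=V(P)\cup V(x_1,y)$. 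The first piece is exactly $\Phi^{-1}(C_f)$; since $\Phi$ is surjective onto $\AA^{n}$ with all of its fibres of dimension $1$, every irreducible component of $\Phi^{-1}(C_f)$ dominates an irreducible subset of $C_f$ with one-dimensional generic fibre, so $\dim\Phi^{-1}(C_f)\le\dim C_f+1$.

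It remains to bound $Z:=V(x_1,y,Q_2,\ldots,Q_n)$. On $\{x_1=y=0\}$ the map $\Phi$ sends $(0,x_2,\ldots,x_n,0)$ to $(0,x_2,\ldots,x_n)$, so there $Q_i$ becomes $r_i:=(\partial f/\partial x_i)(0,x_2,\ldots,x_n)$, a polynomial in $x_2,\ldots,x_n$; thus $Z$ has the same dimension as $W:=V(r_2,\ldots,r_n)\subseteq\AA^{n-1}$. Since $f$ is quasi-homogeneous, each $\partial f/\partial x_j$ is quasi-homogeneous with the same weights as $f$, hence so is each $r_j$ in the variables $x_2,\ldots,x_n$, and every $r_j$ vanishes at the origin because $0\in C_f$. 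If $r_1$ is nonconstant, then applying Lemma~\ref{intersect} to the collection $f_1=r_2,\ldots,f_{n-1}=r_n,\ f_n=r_1$ (quasi-homogeneous with a common weight system, $f_n$ nonconstant) gives $\dim W=\dim V(r_2,\ldots,r_n)\le\dim V(r_1,\ldots,r_n)+1$, and $V(r_1,\ldots,r_n)\subseteq\AA^{n-1}$ is, up to the obvious isomorphism, the scheme $C_f\cap\{x_1=0\}$, so $\dim W\le\dim C_f+1$. If instead $r_1$ is constant, then $r_1\equiv 0$ since it vanishes at the origin, and under the embedding $x'\mapsto(0,x')$ of $\AA^{n-1}$ into $\AA^{n}$ all of $W$ maps into $C_f$ (there every $\partial f/\partial x_j$ vanishes), so $\dim W\le\dim C_f$. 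In either case $\dim Z=\dim W\le\dim C_f+1$, and combining this with the bound for $\Phi^{-1}(C_f)$ we conclude $\dim C_g\le\dim C_f+1$.

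The computation of $\grad g$ and the factorisation of $V(yP,x_1P)$ are routine, and the fibre-dimension bound on $\Phi^{-1}(C_f)$ is standard. The one step that genuinely uses the quasi-homogeneity hypothesis is the bound on $Z$: one must check that the restricted partials $r_1,\ldots,r_n$ form a quasi-homogeneous system with a single common weight vector inherited from $f$, and that they all vanish at the origin (this is where $0\in C_f$ enters), so that Lemma~\ref{intersect} applies; together with the separate and easy treatment of the degenerate case $r_1\equiv 0$. I expect this bookkeeping to be the only mildly delicate part of the argument.
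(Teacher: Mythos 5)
Your proof is correct, and the underlying ideas (chain rule for $\grad g$, decompose $C_g$, apply Lemma~\ref{intersect}) match the paper's own argument; but your organization is cleaner and in fact a bit more complete. You write $C_g$ set-theoretically as a union of two \emph{closed} subsets, $\Phi^{-1}(C_f)=V(P,Q_2,\ldots,Q_n)$ and $V(x_1,y,Q_2,\ldots,Q_n)$, whereas the paper stratifies $\AA^{n+1}$ into $U_1=\{x_1y\neq0\}$, $U_2=\{x_1=0,\,y\neq0\}$, $U_3=\{x_1=y=0\}$ and, as written, omits the stratum $\{x_1\neq0,\,y=0\}$ (harmless, bounded exactly like $U_2$, but still an omission). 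Your single fibration estimate $\dim\Phi^{-1}(C_f)\le\dim C_f+1$ absorbs the paper's $U_1$, $U_2$, and that missing stratum in one step. For the second piece the paper applies Lemma~\ref{intersect} to the partials $f_i$ in $\AA^n$ and then slices by $x_1=0$, while you apply the lemma to the restricted partials $r_j=(\partial f/\partial x_j)|_{x_1=0}$ directly in $\AA^{n-1}$; both routes work. Finally, your explicit case split on whether $r_1$ is constant is the right instinct: the paper asserts ``$f_1$ is nonconstant since $C_f$ contains $0$,'' but that inference is not valid (take $f=x_2^2$ in two variables: $0\in C_f$ yet $f_1\equiv0$); the corollary still holds in that degenerate situation, but one does have to say why, and your argument does so cleanly.
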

\begin{proof}

For $i=1,\ldots, n$ let $f_i(x)$ be the polynomial $\frac{\partial
f}{\partial x_i}(x)$, let $g_i(x,y)$ be the polynomial
$\frac{\partial g}{\partial x_i}(x,y)$, and let $g_{0}(x,y)$ be
$\frac{\partial g}{\partial y}(x,y)$. Note that the polynomials
$f_i$ are quasi-homogeneous with the same weights. Similarly, the
polynomials $g_i$ are quasi-homogeneous with the same weights.

By the chain rule for differentiation one has
$$
g_0(x,y) = x_1f_1(x_1y,x_2,\ldots,x_n),
$$
$$
g_1(x,y) = y f_1(x_1y,x_2,\ldots,x_n),
$$
and, for $i>1$,
$$
g_i(x,y) = f_i(x_1y,x_2,\ldots,x_n).
$$

On the part $U_1$ of $\AA^{n+1}$ where $x_1y\not= 0$  the bound is
clear. Indeed, $U_1\cap C_g$ is the subscheme of $\AA^{n+1}$ given
by $x_1y\not= 0$ and $f_i(x_1y,x_2,\ldots,x_n)=0$ for
$i=1,\ldots,n$, which has dimension at most $\dim C_f+1$.

Now work on the part $U_2$ where $x_1=0$ and $y\not=0$. Then
$U_2\cap C_g$ is the subscheme of $\AA^{n+1}$ given by $f_i(x)=0$
for $i=1,\ldots,n$, $y\not=0$, and $x_1=0$, which has dimension at
most
 $\dim C_f+1$.

Finally work on the part $U_3$ where $x_1=0$ and $y=0$. Then
$U_3\cap C_g$ is the subscheme of $\AA^{n+1}$ given by $x_1=y=0$ and
$0=f_i(x)$ for $i=2,\ldots,n$. We know that $f_1$ is nonconstant
since $C_f$ contains $0$. So, the subscheme of $\AA^n$ given by
$0=f_i(x)$ for $i=2,\ldots,n$ has at most dimension $\dim C_f+1$ by
Lemma \ref{intersect}. Hence, the dimension of $C_g\cap U_3$ has at
most dimension $\dim C_f+1$ and the corollary is proven.
\end{proof}

The following is a trivial lemma.
\begin{lem}\label{trivlem}
Let $f$ be a nonconstant, quasi-homogeneous polynomial in $n$
variables $x_1,\ldots,x_n$ over a field $k$ of characteristic zero.
Then exactly one of the following two statements holds:  either $0$
is a critical point of $f$, or $f$ contains a term of the form
$a_ix_i$ with $a_i\not=0$ in $k$ for some $i\in\{1,\ldots,n\}$.
\end{lem}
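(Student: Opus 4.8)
The plan is to reduce the statement to a tautology by one elementary computation of partial derivatives at the origin. Write $f=\sum_{\alpha\in\NN^n}c_\alpha x^\alpha$ with $c_\alpha\in k$, and let $e_i\in\NN^n$ denote the $i$-th standard basis vector. The first thing I would record is that $\partial f/\partial x_i=\sum_{\alpha}c_\alpha\alpha_i\,x^{\alpha-e_i}$, so that evaluating at $x=0$ retains only the term with $\alpha-e_i=0$ and yields $(\partial f/\partial x_i)(0)=c_{e_i}$, the coefficient in $f$ of the monomial $x_i$.

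Next I would recall that, by definition, $0$ is a critical point of $f$ precisely when $\grad f(0)=0$, i.e.\ when $(\partial f/\partial x_i)(0)=0$ for every $i\in\{1,\ldots,n\}$. Combining this with the computation above, it happens if and only if $c_{e_i}=0$ for all $i$, that is, if and only if $f$ contains no term of the form $a_ix_i$ with $a_i\neq 0$. Hence the two displayed alternatives are logical negations of one another, so exactly one of them holds, which is the assertion. (The hypotheses that $f$ is quasi-homogeneous and that $k$ has characteristic zero are inherited from the context in which the lemma is applied and are not needed for the argument; one may note in passing that a nonconstant quasi-homogeneous polynomial with positive weights automatically satisfies $f(0)=0$, all of its monomials sharing a common positive weighted degree.)

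As the word ``trivial'' in the statement suggests, there is no genuine obstacle here: the sole piece of content is the identification $(\partial f/\partial x_i)(0)=c_{e_i}$, and once that is in hand the dichotomy, together with its exclusivity and exhaustiveness, is immediate.
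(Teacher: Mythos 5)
Your proof is correct and is the natural verification of what the paper, giving no argument, simply calls a trivial lemma: the identity $(\partial f/\partial x_i)(0)=c_{e_i}$ shows that ``$0$ is a critical point of $f$'' and ``$f$ has a nonzero linear term'' are exact negations of each other. Your side remark that neither quasi-homogeneity nor characteristic zero is actually used is also accurate; those hypotheses are carried along only because of the context in which the lemma is invoked.
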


From Lemma \ref{intersect} and Corollary \ref{lem:fg}, Katz' results
of \cite{Katz} can be generalized to quasi-homogeneous polynomials,
see Theorem  \ref{Katzgen}. Namely, Theorem \ref{Katzgen}
generalizes Katz' results \cite{Katz} and some of their corollaries
in \cite{CDenSper} from homogeneous to quasi-homogeneous
polynomials, on $\GG_m^n$ and on $\AA^n$.

\begin{theorem}\label{Katzgen}
Let $f$ be a quasi-homogeneous polynomial in $n$ variables over
$\cO[1/N]$ for some ring of integers $\cO$ and integer $N>0$.
Suppose that $0$ is a critical point of $f$. Let $d$ be the
dimension of the locus of $\grad f=0$, with the dimension of the
empty scheme equal to $-1$. Then there exists $a>0$ such that for
each big enough prime $p$, each power $q$ of $p$ such that $\FF_q$
is an algebra over $\cO[1/N]$, one has
\begin{equation}\label{quasi0}
|\frac{1}{(q-1)^n}\sum_{x\in\GG_m^n(\FF_q)} \psi_q(f(x))| < a
q^{\frac{-n+d}{2}}
\end{equation}
and
\begin{equation}\label{quasi00}
 |\frac{1}{q^n}\sum_{x\in\AA^n(\FF_q)} \psi_q(f(x))| < a
q^{\frac{-n+d}{2}},
\end{equation}
for all choices of $\psi_p$.
\end{theorem}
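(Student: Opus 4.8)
The plan is to reduce to the homogeneous case, where the estimates \eqref{quasi0} and \eqref{quasi00} are exactly Katz' bounds (as recalled and used in \cite{CDenSper}, whose Corollary~6.4 and section~9 package Katz' results \cite{Katz} in precisely this $\GG_m^n$/$\AA^n$ form). The bridge from quasi-homogeneous to homogeneous is the same device used in the proof of Proposition~\ref{c:quasinondeg}: pass from $f(x)$ in $n$ variables to $g(x,y)=f(x_1y,x_2,\dots,x_n)$ in $n+1$ variables, iterate, and after finitely many such steps (together with renumbering variables) one arrives at a homogeneous polynomial $h$. As noted in the proof of Proposition~\ref{c:quasinondeg}, each such step leaves the exponential sum $\frac{1}{(q-1)^n}\sum_{x\in\GG_m^n(\FF_q)}\psi_q(f(x))$ unchanged: introducing a dummy variable $y$ multiplies both the sum and $(q-1)$-power normalization by $(q-1)$, and the torus substitution $(x,y)\mapsto(x_1y,x_2,\dots,x_n)$ is an automorphism of $\GG_m^{n+1}(\FF_q)$ induced by a $\ZZ$-module automorphism of the character lattice, hence a bijection on $\GG_m^{n+1}(\FF_q)$. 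So the left-hand side of \eqref{quasi0} for $f$ equals that of \eqref{quasi0} for $h$.

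The one nontrivial point is tracking the exponent $\frac{-n+d}{2}$. Here is where Corollary~\ref{lem:fg} does the work: if $C_f$ denotes the critical locus $\{\grad f=0\}\subset\AA^n$ and $C_g$ the critical locus $\{\grad g=0\}\subset\AA^{n+1}$, then $\dim C_g\le\dim C_f+1$. Since going from $n$ variables to $n+1$ variables increases the ambient dimension by exactly $1$, the quantity $n-\dim C_f$ is non-increasing along each step; Katz' bound for the homogeneous $h$ in, say, $n'$ variables with critical locus of dimension $d'$ reads $q^{(-n'+d')/2}$, and $-n'+d'\le -n+d$, so the homogeneous bound is at least as strong as the one claimed for $f$. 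One must check that the hypothesis ``$0$ is a critical point'' is preserved: this is clear, since $0\in C_f$ forces $0\in C_g$ (indeed $C_g$ contains the whole line $x_1=y=0$ intersected with the critical locus of the $f_i$, $i\ge 2$, which contains $0$), and this is exactly the standing assumption ``$C_f$ contains the point $0$'' of Corollary~\ref{lem:fg}, needed there to guarantee $f_1=\partial f/\partial x_1$ is nonconstant so that Lemma~\ref{intersect} applies. Thus after finitely many steps one has a homogeneous $h$ in $n'$ variables with $0$ critical and $\dim\{\grad h=0\}=d'$ satisfying $-n'+d'\le -n+d$, and Katz' estimate for $h$ yields \eqref{quasi0} for $f$ with the required exponent, with constant $a$ depending only on the combinatorial data (the degree and number of variables, hence on the weights and degree of $f$).

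For \eqref{quasi00} on $\AA^n$ one argues the same way, using the $\AA^n$-version of Katz' estimate as recalled in \cite{CDenSper}; alternatively one relates the $\AA^n$ sum to $\GG_m$ sums over coordinate subspaces by the standard inclusion–exclusion over which variables vanish, each subsum being an exponential sum of a quasi-homogeneous polynomial in fewer variables (a face of $f$), whose critical locus has dimension bounded appropriately, and summing the resulting $O(q^{(-n+d)/2})$ (for the open stratum) against lower-order terms from the smaller strata; one should take mild care that the restrictions of $f$ to coordinate subspaces retain a critical point at $0$, applying Lemma~\ref{trivlem} to split off the linear-monomial case where the sum vanishes identically. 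The main obstacle is purely bookkeeping: verifying that the dimension inequality $\dim C_g\le\dim C_f+1$ of Corollary~\ref{lem:fg} propagates correctly through the iteration so that $-n+d$ never increases, and that all hypotheses (nonconstancy of the relevant partials, $0$ remaining critical) survive each step. Once that monotonicity is in hand, the theorem is immediate from Katz.
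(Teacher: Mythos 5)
Your argument for \eqref{quasi0} is the paper's: the dummy-variable plus torus-substitution steps of Corollary~\ref{lem:fg} preserve the normalized $\GG_m^n$ sum, $\dim C_g\le\dim C_f+1$ keeps $-n+d$ non-increasing, $0$ stays critical, and one lands on a nonlinear homogeneous polynomial to which Katz' estimates (via \cite{CDenSper}, Corollary~6.1 and section~9) apply. That part is fine.

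The gap is in \eqref{quasi00}, on both of your proposed routes. Your primary route, ``argue the same way using the $\AA^n$-version of Katz,'' breaks at the torus substitution: the map $(x,y)\mapsto(x_1y,x_2,\dots,x_n,y)$ is a bijection of $\GG_m^{n+1}(\FF_q)$ but not of $\AA^{n+1}(\FF_q)$ (the fibre over a point with $x_1=y=0$ is a whole line, and points with $y\neq 0$, $x_1=0$ are hit only once while $y=0$, $x_1\neq 0$ is not in the image at all). Consequently the $\AA^{n+1}$-sum of $g$ is not the $\AA^{n+1}$-sum of the dummy extension of $f$; one picks up a correction term involving $\sum_{\AA^{n-1}}\psi_q(f(0,\hat x))$, so the iteration you rely on does not close. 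Your fallback route (inclusion--exclusion over which coordinates vanish, reducing to $\GG_m$-sums of the restrictions) is the right strategy and is what the paper does, but you dismiss as ``purely bookkeeping'' precisely the new ingredient the paper has to prove: you need $\dim C_{f(0,\hat x)}\le\dim C_f+1$, and this is \emph{not} the inequality of Corollary~\ref{lem:fg} (which concerns the torus substitution $g=f(x_1y,x_2,\dots,x_n)$, a different operation from setting $x_1=0$). The paper establishes this as equation~\eqref{toshow} by writing $f=x_1\tilde f+f_0$, invoking Lemma~\ref{trivlem} to see that $\tilde f$ is nonconstant (this is where ``$0$ is a critical point'' is used), and then applying Lemma~\ref{intersect} to the system $\{x_1=0,\ f_{0j}=0,\ \tilde f=0\}$. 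Without that separate argument, the exponent $\frac{-n+d}{2}$ is not controlled on the smaller strata, and \eqref{quasi00} does not follow.
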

\begin{proof}
We first prove (\ref{quasi0}). By \cite{CDenSper}, Corollary 6.1 and
section 9, (\ref{quasi0}) holds for nonlinear homogeneous $f$. Now
let $f$ be quasi-homogeneous and suppose that $0$ is a critical
point. After finitely many steps as from going from $f$ to $g$ in
the proof of Corollary \ref{lem:fg} and after renumbering the
coordinates, one can go from a quasi-homogeneous polynomial to a
nonlinear homogeneous polynomial $h$ in possibly more variables.
Going from $f$ to $g$ consists of two steps: first one defines a
polynomial $f_0(x,y)$ which equals $f(x)$, that is, one considers
one more variable $y$. Then one performs the transformation on the
torus $(x,y)$ to $(x_1y,x_2,\ldots,x_n)$ to obtain $g$. Under both
these steps, the sum which is the left hand side of (\ref{quasi0})
remains unaltered, since one divides by $(q-1)^n$ with $n$ the
number of variables in the left hand side of (\ref{quasi0}), and
since one just performs a transformation on the torus. Corollary
\ref{lem:fg} now proves (\ref{quasi0}).

Now let us prove (\ref{quasi00}).
 Let $f_0(\hat x)$ be the polynomial $f(0,\hat x)$ in the
$n-1$ variables $\hat x = (x_2,\ldots,x_n)$. Clearly $f_0$ is
quasi-homogeneous in $n-1$ variables. Write $C_f$ for the locus of
$\grad f=0$ in $\AA^{n}$ and $C_{f_0}$ for the locus of $\grad
f_0=0$ in $\AA^{n-1}$. By (\ref{quasi0}) it is enough to show that
$n-1+\dim C_{f_0}\leq n+d$. Thus we only have to show that
\begin{equation}\label{toshow}
\dim C_{f_0}\leq \dim C_{f}+1,
\end{equation}
where we recall that $\dim C_{f}=d$. This inequality (\ref{toshow})
follows from writing
$$
f(x)=x_1\tilde f(x)+f_0(\hat x)
$$
with $\tilde f$ a polynomial in $x$, and from Lemma \ref{intersect},
as follows.
 By Lemma \ref{trivlem}, $\tilde f$ is
nonconstant. For $i=1,\ldots, n$, let $f_i(x)$ be the polynomial
$\frac{\partial f}{\partial x_i}(x)$ and for $j=2,\ldots,n$, let
$f_{0j}(\hat x)$ be the polynomial $\frac{\partial f_0}{\partial
x_j}(\hat x)$. Let $Y$ be the locus in $\AA^n$ of the polynomials
$f_{0j}$ and the equation $x_1=0$. Let $Z$ be the intersection of
$C_f$ with $x_1=0$. Then clearly
 \begin{equation}\label{dimYf0}
 \dim Y=\dim C_{f_0}.
\end{equation}
  Since $f_1=\tilde f
+ x_1\partial \tilde f /
\partial x_1$, it follows that $Z$ equals the intersection of $Y$
with $\tilde f(x)=0$. The polynomials $f_i$ and $\tilde f$ are
quasi-homogeneous with the same weights. By Lemma \ref{intersect}
and since $\tilde f$ is nonconstant,  $\dim Y \leq \dim Z+1$.
Clearly also $\dim Z \leq d$. Now (\ref{toshow}) follows by
(\ref{dimYf0}) and thus the Proposition is proved.
\end{proof}

\section{Proofs of the main theorems}

\begin{proof}[Proof of Theorem \ref{mt1}]
By (\ref{quasi1}) of Proposition \ref{c:quasinondeg}, one finds
immediately that there is $c$ such that for all $K$ and $q_K$ as in
\ref{fg} of big enough residue characteristic, and all faces $\tau$
of $\Delta_0(f)$, resp.~all compact faces $\tau$ of $\Delta_0(g)$,
\begin{equation}\label{Esig}
|E(K,y,f_\tau,\psi_K)| <cq_K^{-\sigma(f_\tau)},
\end{equation}
resp.
\begin{equation}\label{Esigb}
|E(K,y,g_\tau,\psi_K)| <cq_K^{-\sigma(g_\tau)},
\end{equation}
since automatically each $f_\tau$, resp.~each $g_\tau$, is
quasi-homogeneous for such $\tau$. Moreover, $c$ only depends on
$\Delta_0(f)$, resp.~on $\Delta_0(g)$, by Proposition
\ref{c:quasinondeg}.
 Now use
Proposition \ref{r21}, Proposition \ref{AB}, and (\ref{Esig}),
(\ref{Esigb}).
\end{proof}

The above proof of Theorem \ref{mt1} is similar to the one of the
main theorem of \cite{CDenSper}. The key new ingredient in its proof
is Proposition \ref{c:quasinondeg} for quasi-homogeneous polynomials
instead of for homogeneous polynomials. Note that Proposition
\ref{c:quasinondeg} was used both for (\ref{c2}) and for (\ref{c3}).

\par

\begin{proof}[Proof of Theorem \ref{mt2}]\label{igum=1}
By Lemma \ref{trivlem} we may suppose that $0$ is a critical point
of $f$. Otherwise, that is, when $0$ is not a critical point, one
has $S_{h,K,\psi_K}(y)=0$ for the $K$ and $y$ under consideration.
The theorem then follows from (\ref{quasi00}) of Theorem
\ref{Katzgen} and from the fact that the motivic oscillation index
$\alpha_h$ of $h$ satisfies
$$
\frac{-n+d}{2}\leq \alpha_h,
$$
with $d$ the dimension of the locus of $\grad h=0$, by Theorem 5.1
of \cite{Cigumodp}.
\end{proof}

\subsection*{Acknowledgment}
I would like to thank J.~Denef, E.~Hrushovski, F.~Loeser, and
especially J.~Nicaise for inspiring discussions during the
preparation of this paper.

\bibliographystyle{amsplain}
\bibliography{anbib}
\end{document}